\theoremstyle{plain}
\newtheorem{te}{Theorem}[section]
\newtheorem{lem}[te]{Lemma}
\newtheorem{pr}[te]{Proposition}
\newtheorem{de}[te]{Definition}
\theoremstyle{remark}
\newtheorem{re}[te]{Remark}
\newtheorem*{ack*}{Acknowledgment}
\def\0{{\bf 0}}
\def\T{{\mathbb T}}
\def\R{{\mathbb R}}
\def\E{{\mathbb E}}
\def\C{{\mathbb C}}
\def\P{{\mathbb P}}
\def\nint{\mathop{\diagup\kern-13.0pt\int}}
\def\dist{{\operatorname{dist}\,}}
\def\Ic{{\mathcal I}}\def\Fc{{\mathcal F}}
\def\Tc{{\mathcal T}}\def\Ec{{\mathcal E}}
\def\Bc{{\mathcal B}}
\def\Ac{{\mathcal A}}\def\Qc{{\mathcal Q}}
\def\Pc{{\mathcal P}}
\def\Lc{{\mathcal L}}
\begin{document}

\title[ ]{Fourier decay for curved Frostman measures}

\author{Shival Dasu}
\address{Department of Mathematics, Indiana University, 831 East 3rd St., Bloomington IN 47405}
\email{sdasu@iu.edu}
\author{Ciprian Demeter}
\address{Department of Mathematics, Indiana University, 831 East 3rd St., Bloomington IN 47405}
\email{demeterc@iu.edu}

\keywords{curvature, decoupling, Furstenberg sets}
\thanks{The second author is  partially supported by the NSF grant DMS-2055156}

\begin{abstract}
We investigate Fourier decay for Frostman measures supported on  curves with nonzero curvature. We combine decoupling with known lower bounds for Furstenberg sets to extend the main result in \cite{O}.  	
 	
\end{abstract}

\maketitle

\section{introduction}

We prove the following result.
\begin{te}
\label{t1}
Let $\Gamma$ be the graph of a $C^3$ function $\gamma:[-1,1]\to\R$ satisfying the nonzero curvature condition $\min_{-1\le x\le 1}|\gamma''(x)|>0$.
Let $0<s<1$. Let $\mu$ be a Borel measure supported on $\Gamma$
satisfying the Frostman condition
\begin{equation}
\label{e4}
\mu(B(y,r))\lesssim r^s
\end{equation}
for each $y\in \R^2$ and each $r>0$. Then there exists $\beta>0$ (depending on $s$) such that for each ball $B_R\subset\R^2$ of radius $R$ we have
\begin{equation}
\label{e1}
\|\widehat{\mu}\|_{L^6(B_R)}\lesssim R^{\frac{2-2s-\beta}{6}}.
\end{equation}	
\end{te}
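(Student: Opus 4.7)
The plan is to combine the Bourgain--Demeter $\ell^2$-decoupling theorem for the curve $\Gamma$ at the critical exponent $p=6$ with a dyadic pigeonhole on the masses of the $\delta$-caps of $\Gamma$ (with $\delta=R^{-1/2}$), the key extra input being a refined cap count derived from known lower bounds for Furstenberg sets. First I would localize: multiply $\widehat\mu$ by a Schwartz bump $\phi_R$ adapted to $B_R$ whose Fourier transform is supported in $B(0,R^{-1})$, and work with $f:=\widehat\mu\,\phi_R$, so that $\widehat f=\mu*\widehat{\phi_R}$ lives in the $R^{-1}$-neighborhood of $\Gamma$. Partition $\Gamma$ into arcs $\theta$ of length $\delta$, decompose $f=\sum_\theta f_\theta$, and group caps dyadically by mass, $\Theta_\lambda:=\{\theta:\mu(\theta)\sim\lambda\}$ for $\lambda=2^{-k}\in[R^{-O(1)},R^{-s/2}]$; a standard pigeonhole reduces the problem, up to a $(\log R)^{O(1)}$ loss, to a single dominant $\lambda$.

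Next I would establish the per-cap estimates. The Frostman hypothesis applied at scale $R^{-1}$, together with Plancherel applied to $\widehat{f_\theta}=(\mu\chi_\theta)*\widehat{\phi_R}$, yields
\[
\|f_\theta\|_{L^\infty}\lesssim\mu(\theta)\lesssim\lambda,\qquad \|f_\theta\|_{L^2}^2\lesssim R^{2-s}\mu(\theta)\lesssim R^{2-s}\lambda,
\]
from which H\"older interpolation gives $\|f_\theta\|_{L^6}^6\lesssim\lambda^5 R^{2-s}$. Plugging this into $\ell^2$-decoupling at $p=6$ produces
\[
\Big\|\sum_{\theta\in\Theta_\lambda}f_\theta\Big\|_{L^6}\lesssim_\varepsilon R^\varepsilon|\Theta_\lambda|^{1/2}\lambda^{5/6}R^{(2-s)/6}.
\]
Using only the trivial cap count $|\Theta_\lambda|\le\lambda^{-1}$ together with $\lambda\le R^{-s/2}$ recovers \emph{exactly} the baseline $R^{(2-2s)/6}$, so the sought $R^{-\beta/6}$ gain is equivalent to a strict improvement in the number of heavy caps.

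The crucial new ingredient is a Furstenberg-type cap count of the shape
\[
|\Theta_\lambda|\lesssim_\varepsilon R^\varepsilon\lambda^{-\alpha(s)},\qquad\alpha(s)<1\text{ for all }s\in(0,1).
\]
The dualization is standard: the nonzero curvature of $\Gamma$ forces the caps in $\Theta_\lambda$ to correspond to $|\Theta_\lambda|$ distinct $\delta$-separated normal directions, hence to $|\Theta_\lambda|$ tubes of dimensions $R^{1/2}\times R$, each carrying $\mu$-mass $\gtrsim\lambda$; packing too many such tubes would produce an $(s,1)$-Furstenberg-type configuration of dimension below what the known lower bounds permit, contradicting the $s$-Frostman property of $\mu$. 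Substituting this back and optimizing over $\lambda\lesssim R^{-s/2}$ delivers the stated bound with $\beta=\beta(s,\alpha)>0$ (a short arithmetic check gives $\beta\sim 3s(1-\alpha)/2$ modulo $\varepsilon$-losses).

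The hard part, and the main obstacle, is precisely the Furstenberg step: one must extract from the known dimensional lower bounds on $(s,1)$-Furstenberg sets a clean, $\varepsilon$-loss-controlled, $\delta$-discretized cap count valid uniformly in $\lambda$, with $\alpha(s)$ strictly less than $1$ for every $s\in(0,1)$. A secondary bookkeeping concern is to control the cumulative $R^\varepsilon$ and $(\log R)^{O(1)}$ losses so that they can all be absorbed into the final positive $\beta$.
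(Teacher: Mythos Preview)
Your scheme breaks at the ``crucial new ingredient'': the improved cap count $|\Theta_\lambda|\lesssim_\varepsilon R^\varepsilon\lambda^{-\alpha(s)}$ with $\alpha(s)<1$ is simply false. Take $\mu$ to be AD-regular of exponent $s$ on $\Gamma$; then every cap $\theta$ of length $R^{-1/2}$ has $\mu(\theta)\sim R^{-s/2}$, so for $\lambda\sim R^{-s/2}$ one has $|\Theta_\lambda|\sim R^{s/2}\sim\lambda^{-1}$, saturating the trivial bound. Since the theorem certainly holds for AD-regular measures, the gain cannot come from a cap count. The proposed ``dualization'' does not produce a Furstenberg configuration: the caps $\theta$ are just $\delta$-intervals on a one-dimensional curve, and the only constraint the Frostman condition imposes is $\#\{\theta\subset I:\mu(\theta)\sim\lambda\}\lesssim |I|^s/\lambda$ for every arc $I$, which at $|I|=1$ is the trivial bound again. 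There is no two-parameter family of lines or tubes determined solely by the set $\Theta_\lambda$; your sentence ``each carrying $\mu$-mass $\gtrsim\lambda$'' conflates the spectral side (where $\mu$ lives) with the spatial side (where the $R^{1/2}\times R$ tubes live).

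The paper's argument is structurally different and lives on the spatial side. After pigeonholing on cap mass (your $\lambda$, their $M$), they perform a wave packet decomposition $F_\theta=\sum_T a_TW_T$ and split $B_R$ into $R^{1/2}$-squares that are \emph{light} or \emph{heavy} according to how many tubes pass through them. Light squares are handled by \emph{refined} decoupling (Theorem~\ref{t29}), which replaces your H\"older loss $|\Theta_\lambda|^{1/3}$ by $N^{1/3}$ where $N$ is the local tube count; this already gives a gain there. For heavy squares the Furstenberg input enters, but the Furstenberg set is the set of (rescaled) \emph{heavy squares}, and the lines are the axes of the heavy tubes: the Frostman condition on $\mu$ forces the tube family to be a $(\delta,1,C)$-set, and each heavy tube meets many heavy squares, yielding a $(\delta,1-s,1)$-Furstenberg set whose known lower bound contradicts an easy upper bound unless $|\Qc_{heavy}|$ is small. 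A separate bush estimate, using a second decoupling at scale $R^{-1/4}$ and an extra pigeonholed parameter $P$, then finishes the heavy part. None of this is captured by a bound on $|\Theta_\lambda|$.
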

This theorem was proved in \cite{O} in the case $\Gamma$ is the parabola
$$\P^1=\{(\xi,\xi^2):\;|\xi|\le 1\}.$$

The location of $B_R$ plays no role in the argument, so we may and will assume $B_R$ is centered at the origin.
Theorem \ref{t1} has an intrinsic interest stemming from its interplay of curvature and the Fourier transform in the fractal setting. In addition to this, it was observed in \cite{O} that the case $\Gamma=\P^1$ encodes non trivial sum-product information, as it implies the non existence of Borel subrings of the reals with Hausdorff dimension $s\in (0,1)$.
\smallskip

The argument in \cite{O} has two major ingredients. The first one uses in a critical way the fact that $\Gamma=\P^1$.  Following the  discretization of the measure, the $L^6$ statement \eqref{e1} is converted into a 3-energy estimate (see Section \ref{s4} for a definition). In a similar spirit with an elementary  computation from \cite{BD}, this is interpreted as  an incidence problem, between $1/R$-squares and $1/R$-neighborhoods of circles. As the circles are centered on the $x$-axis, the problem can be further reformulated  in the more familiar language of incidences between squares and tubes.

This leads to the second major ingredient in \cite{O}.  More precisely, it is proved that if the 3-energy is too ``large", it forces the existence of a certain ``small"  Furstenberg set (see Section \ref{s6} for a definition). However, lower bounds from the literature on Furstenberg sets show that they have to be ``large", leading to a contradiction.
\smallskip

We provide an alternative argument to Theorem \ref{t1}, that bears both similarities to the one in \cite{O}, but also exhibits key differences and provides new angles that could  potentially make it useful for further advances on the topic. Let us describe how it works.

We also use two main ingredients. The first one is the (regular) $L^6$ decoupling.

\begin{te}
	\label{t2}
	Assume $F:\R^2\to\C$ has the spectrum inside the $1/R$-neighborhood of $\Gamma$. Cover this neighborhood with essentially rectangular boxes $\theta$ with dimensions roughly  $R^{-1/2}$ and $R^{-1}$. Let $F_\theta$ be the Fourier restriction of $F$ to $\theta$. Then
	$$\|F\|_{L^6(\R^2)}\lesssim_\epsilon R^\epsilon(\sum_\theta\|F_\theta\|_{L^6(\R^2)}^2)^{1/2}\;\;\;\;\text{(global version)}$$
	and for each ball $B_R$ of radius $R$
	$$\|F\|_{L^6(B_R)}\lesssim_\epsilon R^\epsilon(\sum_\theta\|F_\theta\|_{L^6(w_{B_R})}^2)^{1/2}\;\;\;\;\text{(local version)},$$
	where $w_{B_R}$ is a smooth approximation of $1_{B_R}$
\end{te}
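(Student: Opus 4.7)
The plan is to deduce Theorem \ref{t2} from the Bourgain--Demeter $\ell^2$ decoupling theorem for the parabola, by treating the curve $\Gamma$ locally as an affine perturbation of $\P^1$. The global version is obtained by this local reduction; the local version is then extracted from the global one by a standard Schwartz localization.

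For the global version, I first partition $[-1,1]$ into a bounded number of subintervals on which $\gamma''$ has constant sign and bounded variation, and then further into subintervals $I$ of length $R^{-1/2}$. On each such piece the Taylor expansion
\[
\gamma(x_0+h)=\gamma(x_0)+\gamma'(x_0)h+\tfrac12\gamma''(x_0)h^2+O(h^3),\qquad |h|\le R^{-1/2},
\]
has cubic remainder $O(R^{-3/2})$, which is absorbed by the vertical thickness $R^{-1}$. After an affine change of coordinates normalizing $\gamma''(x_0)$ to $1$, the portion of the $R^{-1}$-neighborhood of $\Gamma$ over $I$ coincides (up to bounded multiplicative constants) with an arc of the standard $R^{-1}$-neighborhood of $\P^1$, and the boxes $\theta$ become the canonical $R^{-1/2}\times R^{-1}$ decoupling caps. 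Applying the Bourgain--Demeter $\ell^2(L^6)$ decoupling for $\P^1$ on each of the $O(1)$ initial pieces and summing in $\ell^2$ produces the global estimate with an $R^\epsilon$ loss.

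For the local version I pick a Schwartz $\eta$ with $\eta\gtrsim 1$ on $B_R$ and $\widehat\eta$ supported in $B(0,R^{-1})$. Since $\widehat{F\eta}=\widehat F\ast\widehat\eta$, the spectrum of $F\eta$ still sits in an $O(R^{-1})$-neighborhood of $\Gamma$, so the global version applies to $F\eta$. The caps $(F\eta)_\theta$ have Fourier support in a slight dilate $\tilde\theta$ of $\theta$, and a routine weight manipulation bounds them by $\|F_{\tilde\theta}\|_{L^6(w_{B_R})}$; collapsing the dilated caps $\tilde\theta$ back to the original $\theta$ only costs a finite overlap factor. The main conceptual obstacle is the parabola decoupling inequality itself, which is a deep theorem used here as a black box; the reduction from a general curved $\Gamma$ to $\P^1$ and the passage from global to local are both routine given this input.
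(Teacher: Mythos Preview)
The paper does not prove Theorem \ref{t2}; it is quoted as a black box, namely the Bourgain--Demeter $\ell^2$ decoupling theorem \cite{BD}, which in its original form already covers $C^2$ curves with nonvanishing curvature, not merely $\P^1$. So there is no proof in the paper to compare your proposal against.

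That said, your reduction from $\Gamma$ to the parabola has a genuine gap. Your Taylor expansion shows that over each interval $I$ of length $R^{-1/2}$ the $R^{-1}$-neighborhood of $\Gamma$ coincides (after an affine map depending on $I$) with a parabolic arc; but each such $I$ is a \emph{single} cap $\theta$, so there is nothing left to decouple at that scale. When you then write ``applying the Bourgain--Demeter $\ell^2(L^6)$ decoupling for $\P^1$ on each of the $O(1)$ initial pieces,'' you are implicitly claiming that over an interval of length $\sim 1$ the curve $\Gamma$ lies within $O(R^{-1})$ of some parabola. This is false in general: the cubic Taylor remainder over a unit interval is $O(1)$, not $O(R^{-1})$, and the affine maps you built vary from one $I$ to the next, so they cannot be assembled into a single affine change of variables on the whole $O(1)$ piece.

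There are two standard ways to fix this. Either (i) simply invoke Bourgain--Demeter in its general form for curves with $|\gamma''|\sim 1$ (the proof via broad--narrow and induction on scales never uses that the curve is exactly $\P^1$); or (ii) run an honest iteration: over intervals of length $\sim R^{-1/3}$ the cubic error is $O(R^{-1})$, so after parabolic rescaling each such arc becomes a unit-length curve inside a $\sim R^{-1/3}$-neighborhood, and one repeats, terminating after $O(\log\log R)$ steps with a total loss absorbed into $R^\epsilon$. Your derivation of the local version from the global one via Schwartz localization is standard and correct.
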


This tool is efficient for handling the 3-energy of measures supported near $\Gamma$, and it easily implies \eqref{e1} without the gain $R^{-\beta}$ (to be more precise, with arbitrarily small $R^\epsilon$ losses). While decoupling is sharp in many applications, Theorem \ref{t1} is an instance where decoupling needs an ``$\epsilon$-boost"
from incidence geometry, in order to lead to sharper results. We break $\widehat{\mu}$ into wave packets supported on tubes. Then we partition the spatial domain into {\em light }squares -those intersecting only few tubes- and {\em heavy} squares. Refined decoupling, Theorem \ref{t29}, easily takes care of the $L^6$ mass of $\widehat{\mu}$ restricted to the light squares. The heavy squares need a more delicate argument that brings to bear the second main ingredient: an improved incidence estimate for tubes, that we derive from the recent progress on  Furstenberg sets.
\smallskip

A surprising feature of the $L^6$ argument in \cite{O} is that it takes place on the {\em spectral side}, where $\mu$ lives. This is possible due to the special nature of the exponent 6, being an even number, but also due to the special nature of the parabola.  Our argument takes place on the {\em spatial side}. We start by breaking $\widehat{\mu}$ into wave packets. The Furstenberg set we construct consists of the heavy squares, and they live on the spatial side. Another interesting difference is that \cite{O} uses $(\delta,s,2s,C)$-sets, while we use $(\delta,1-s,1,C)$-sets. See Definition \ref{d17}. The fact that the upper-regularity \eqref{e4} of the measure $\mu$ forces the needed $(\delta,1-s,1,C)$-discretized structure is a consequence of the fact that tubes point in the directions determined by arcs $\theta$ on $\Gamma$, with prescribed mass $\mu(\theta)\approx R^{-s/2}$ close to the maximum value allowed by \eqref{e4}.
\smallskip

There are a few advantages of our argument. On the minor side, it does not need the fine discretization at scale $1/R$, present in \cite{O}. The pigeonholing (P1) in Section \ref{s1} is a non technical description of the fact that, essentially, the restriction $\mu_\theta$ of $\mu$  to $\theta$ is concentrated on the union of $M$ arcs of size $1/R$. The additive structure of these arcs, whose analysis occupies a large part of the proof in \cite{O}, is efficiently taken care by our application of decoupling. Also, we do not make any serious use of transversality,  this feature is only apparent in the proof of the very standard estimate \eqref{e31}.

On the more serious side, Theorem \ref{t1} covers all curved $\Gamma$. It is the special  nature of $\P^1$ that allows for the conversion of the  3-energy into the language of incidences between lines/tubes and points/squares. Such a mechanism is not available for arbitrary curves. 
Also, our proof does not depend in any way on the fact that $6$ is an even integer. This is because decoupling also does not rely on this feature. Given  this, we believe that the argument presented here is potentially  applicable for other problems. One such example is the case of Frostman measures supported on other manifolds, for which the correct index replacing $6$ is fractional. These questions may be connected with novel  (e.g. sum-product) phenomena in high dimensions. We must caution however that the analogues of Furstenberg sets in higher dimensions are poorly understood, and likely rather difficult.

This begs another question: can one prove Theorem \ref{t1}, by avoiding the use of Furstenberg sets? At some level, this may seem counterintuitive; the paper \cite{KT} shows the essential  {\em equivalence} between the discretized ring conjecture and the existence of non trivial lower bounds for sets of Furstenberg-type.

However, we can give a partial answer to the question, if in addition to the upper regularity \eqref{e4} we also assume lower regularity  (such a measure is called AD-regular)
\begin{equation}
\label{e4lo}
\mu(B(y,r))\gtrsim r^s, \;\; \text {for each }y\in\Gamma,\; r\le 1.
\end{equation}
In this case, decoupling reduces \eqref{e1} to proving some sensible estimates for $\|\widehat{\mu_\theta}\|_{L^6(B_R)}$.
Such estimates became recently available, see the discussion in Section \ref{s0}, and (at least some of) their proofs seem (at least on the surface) fairly disjoint from the circle of ideas surrounding sum-product theory. This  provides an alternative argument for \eqref{e1} in the AD-regular case. That being said, Frostman measures are typically not lower-regular. An interesting related example is the construction in \cite{EV} of additive subgroups of the reals of arbitrary Hausdorff dimension $s\in (0,1)$. If their Frostman measures were regular, they would be forced to satisfy improved energy estimates, contradicting the  additive structure.

There are many examples showing decoupling encodes sum-product structure, relevant to 3-energies. To point out just one of them, if $A\subset [1,2]$ is $\delta$-separated, then we have
$$|\{(a_1,\ldots,a_6)\in A^6:\;a_1+a_2+a_3=a_4+a_5+a_6\text{ and }a_1a_2a_3=a_4a_5a_6\;\}|\lesssim_\epsilon \delta^{-\epsilon}|A|^3.$$
This is an immediate application of Theorem \ref{t2} to the $\log$ function.

In our argument, we only use decoupling at two spatial scales, $R$ and $R^{1/2}$. One question remains on whether a multi-scale decoupling could exploit lower regularity of $\mu$ at appropriate scales, and allow for the efficient use of the aforementioned improved energy estimates. The heuristic for this is that $\mu$ may be (very informally) thought of as consisting of two parts: the AD-regular part with exponent $s$ and the upper-regular part associated with larger exponents. The latter part is ameanable to induction, as the upper bound in \eqref{e1} becomes more favorable as $s$ increases.

\bigskip

We close the introduction with a word about hidden constants. The application of decoupling will be one of the sources for $R^\epsilon$ losses in the argument. Other similar losses come from pigeonholing, and the fact that wave packets are not fully spatially supported on tubes. All these losses can be made arbitrarily small by choosing $\epsilon$ small enough. They will be compensated by the (fixed) gain guaranteed by the incidence geometry of the Furstenberg sets. We will be somewhat casual in keeping track of the losses. In fact, in order to not sacrifice  the clarity of the exposition, most of the losses will be hidden under the rug.

\begin{ack*}
We are grateful to Tuomas Orponen for providing feedback on the manuscript.	
\end{ack*}

\section{Proof of Theorem \ref{t1}}

\subsection{A warm up: energies of measures and the proof in the AD-regular case}
\label{s0}
Let $\nu$ be a Borel measure on the real line. We define its $2$ and $3$-energies at scale $r>0$ as follows
$$\Ec_2(\nu,r)=\int_{\R^3}\nu([x_1+x_2-x_3-r,x_1+x_2-x_3+r])d\nu(x_1)d\nu(x_2)d\nu(x_3),$$
$$\Ec_3(\nu,r)=\int_{\R^5}\nu([x_1+x_2+x_3-x_4-x_5-r,x_1+x_2+x_3-x_4-x_5+r])d\nu(x_1)\ldots d\nu(x_5).$$
It is clear that if $\nu$ is an upper-regular (i.e satisfies \eqref{e4}) {\em probability} measure, then
$$\Ec_3(\nu,r)\lesssim \Ec_2(\nu,r)\lesssim r^s.$$
These estimates can be sharp. An example is when $\nu$ is supported on $r^{-s}$ intervals $I$ of length $r$ in $[0,1]$, with centers in arithmetic progression. On each $I$ we let $\nu$ be the normalized Lebesgue measure $r^{s-1}dx$. It is easy to compute that $\Ec_3(\nu,r)\sim \Ec_2(\nu,r)\sim r^s.$
 
When $\nu$ is also lower-regular according to \eqref{e4lo}, we have the following improvement. See \cite{DZ} and \cite{CT}.
\begin{te}
\label{tlasst}	
Assume $\nu$ is some AD-regular probability measure on $[-1,1]$.
Then there is $\beta>0$ depending on $s$ such that for each $0<r\le 1$ we have
$$\Ec_3(\nu,r)\lesssim \Ec_2(\nu,r)\lesssim r^{s+\beta}.$$
\end{te}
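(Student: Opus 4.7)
The plan is to reduce the $\Ec_2$ bound to a discretized additive-energy estimate on the support of $\nu$, and to extract the gain from a sum-product-type input adapted to AD-regular sets. First I would discretize. By AD-regularity, take a maximal $r$-separated set $X \subset \operatorname{supp}(\nu)$; the two-sided Frostman bounds \eqref{e4} and \eqref{e4lo} force $|X| \sim r^{-s}$ and $\nu(B(x,r)) \sim r^s$ for every $x \in X$. Unpacking the definition of $\Ec_2$ gives
$$\Ec_2(\nu,r) \sim r^{4s}\, E_r(X), \qquad E_r(X) := \#\bigl\{(x_1,x_2,x_3,x_4) \in X^4 : |x_1+x_2-x_3-x_4| \le r\bigr\}.$$
Since $X$ is $r$-separated, the trivial bound $E_r(X) \le |X|^3 = r^{-3s}$ only recovers $\Ec_2 \le r^s$; the target is $E_r(X) \lesssim r^{-3s+\beta}$.

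To obtain the improvement, I would argue by contradiction. Suppose $E_r(X) \ge r^{-3s+\beta}$ for some small $\beta>0$. An $r$-discretized Balog--Szemer\'edi--Gowers theorem produces a subset $X' \subseteq X$ with $r$-covering number $|X'|_r \gtrsim r^{-s}$ (up to logarithmic losses) whose $r$-sumset satisfies $|X'+X'|_r \lesssim r^{-s - C\beta}$ for some absolute constant $C$. Crucially, $X'$ inherits from $X$ (up to polylog losses) the $(r,s)$-regular structure. Now invoke a sumset lower bound for AD-regular sets: by Bourgain's discretized sum-product theorem, or its recent refinements by Orponen and Shmerkin --- the same Furstenberg-set machinery that drives Theorem~\ref{t1} --- every AD-regular $(r,s)$-set $X'$ with $s \in (0,1)$ obeys
$$|X'+X'|_r \gtrsim r^{-s - \beta_0}$$
for some $\beta_0 = \beta_0(s) > 0$. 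Comparing the two sumset bounds forces $\beta_0 \le C\beta$, so choosing $\beta < \beta_0/C$ yields the desired contradiction and gives $\Ec_2(\nu,r) \lesssim r^{s + \beta_0/C}$.

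The inequality $\Ec_3 \lesssim \Ec_2$ is a soft consequence of positive-definiteness. Setting $\sigma := \nu * \widetilde{\nu}$, one has $\Ec_k(\nu,r) = \sigma^{*k}([-r,r])$, and $\widehat{\sigma^{*2}} = |\hat\nu|^{4} \ge 0$. Smoothing $1_{[-r,r]}$ by a nonnegative bump $\psi_r$ with $\widehat{\psi_r} \ge 0$, a quick Fourier computation shows $(\sigma^{*2} * \psi_r)(z) \le (\sigma^{*2} * \psi_r)(0) \lesssim \Ec_2$ for all $z$, so
$$\Ec_3 = \int \sigma^{*2}([-y-r,-y+r])\, d\sigma(y) \;\lesssim\; \sup_z \sigma^{*2}([z-r,z+r])\cdot \sigma(\R) \;\lesssim\; \Ec_2.$$
The main obstacle is clearly the AD-regular sumset lower bound invoked above; discretization, BSG, and the 3-to-2 energy comparison are more standard bookkeeping. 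The papers \cite{DZ} and \cite{CT} package the sum-product input differently (fractal uncertainty principle for $L^2$-flattening versus direct combinatorial estimates), but in every incarnation it is the quantitative heart of the argument and the source of $\beta$.
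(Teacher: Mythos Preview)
The paper does not prove Theorem~\ref{tlasst}; it is imported as a black box from \cite{DZ} and \cite{CT}, so there is no in-paper argument to compare against. Your discretization to $E_r(X)$ and the $\Ec_3 \lesssim \Ec_2$ step via positive-definiteness of $|\hat\nu|^4$ are both fine and standard.

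The BSG route for the $\Ec_2$ bound, however, has a genuine gap. After BSG you obtain $X' \subset X$ with $|X'| \gtrsim r^{C\beta}|X|$ (the loss is polynomial in $K=r^{-\beta}$, not logarithmic as you write) and $|X'+X'|_r \lesssim r^{-s-C\beta}$. The subset $X'$ inherits the \emph{upper} regularity from $X$ --- any subset of a $(\delta,s)$-set is again a $(\delta,s)$-set --- but it does \emph{not} inherit the \emph{lower} regularity: nothing in BSG prevents $X'$ from being essentially an arithmetic progression at scale $r$. For merely upper-regular $(r,s)$-sets the sumset-only expansion $|X'+X'|_r \gtrsim r^{-s-\beta_0}$ is simply false (APs are the counterexample), and Bourgain's discretized sum-product only controls $\max(|X'+X'|_r,|X'X'|_r)$, not the sumset alone. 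So the sentence ``every AD-regular $(r,s)$-set $X'$ obeys $|X'+X'|_r \gtrsim r^{-s-\beta_0}$'' is being applied to a set that is not AD-regular, and that is exactly where the argument breaks. The proofs in \cite{DZ} and \cite{CT} do not pass through BSG; they exploit the AD-regularity of $X$ \emph{at every scale} via a multi-scale induction, and it is this persistent two-sided regularity that rules out the AP obstruction and produces $\beta$.
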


We can use this to give a quick proof of Theorem \ref{t1} if $\mu$ is AD-regular. This property forces $\mu$ to be supported on roughly $R^{s/2}$ arcs $\theta$ of length $R^{-1/2}$.  First, we apply the local version of decoupling, Theorem \ref{t2}
$$\|\widehat{\mu}\|_{L^6(B_R)}\lesssim_{\epsilon}R^{\epsilon}(\sum_{\theta}\|\widehat{\mu_\theta}\|_{L^6(w_{B_R})}^2)^{1/2}\lesssim R^{\frac{s}{4}+\epsilon}\max_{\theta}\|\widehat{\mu_\theta}\|_{L^6(w_{B_R})}.$$
Fix $\theta$. Let $\nu$ be the measure supported on the real line such that
$$\nu(S)=\mu_\theta(\{(\xi,\gamma(\xi)):\;\xi\in S\}).$$
We have
$$\widehat{\mu_\theta}(x)=\int e((\xi,\gamma(\xi))\cdot x)d\nu(\xi).$$
It follows that
$$\|\widehat{\mu_\theta}\|_{L^6(w_{B_R})}^6=\int_{\R^6}\widehat{w_{B_{R}}}(\xi_1+\ldots-\xi_6,\gamma(\xi_1)+\ldots-\gamma(\xi_6))\;d\nu(\xi_1)\ldots d\nu(\xi_6).$$
We may assume $|\widehat{w_{B_R}}|\lesssim R^21_{B(0,R^{-1})}$. This restricts the domain of integration to $|\xi_1+\xi_2+\xi_3-\xi_4-\xi_5-\xi_6|\le R^{-1}$ and leads to the upper bound
$$\|\widehat{\mu_\theta}\|_{L^6(w_{B_R})}^6\lesssim R^2\Ec_3(\nu,R^{-1}).$$
The rescaled version of Theorem \ref{tlasst} shows that
$$\Ec_3(\nu,R^{-1})\lesssim \mu(\theta)^5(\frac1R)^{s+\beta}\sim R^{-\frac{7s}{2}-\beta}.$$
Putting all these estimates together we arrive at the desired conclusion
$$\|\widehat{\mu}\|^6_{L^6(B_R)}\lesssim_{\epsilon}R^{\epsilon} R^{\frac{3s}{2}}R^2R^{-\frac{7s}{2}-\beta}=R^{2-2s+\epsilon-\beta}.$$

\medskip

\subsection{Pigeonholing and the fractal distribution of wave packets}
\label{s1}
Fix a smooth $\psi:\R^2\to \R$ satisfying $0\le \psi\le 1_{B(0,1)}$. Write $\psi_\delta(\xi)=\delta^{-2}\psi(\frac{\xi}{\delta})$ and $\mu_\delta=\mu*\psi_\delta$. Then \eqref{e1} can  be reformulated equivalently as
\begin{equation}
\label{e2}
\|\widehat{\mu_{1/R}}\|_{L^6(\R^2)}^6\lesssim R^{{2-2s-\beta}}.
\end{equation}
Note that $\mu_{1/R}$ is supported on the $1/R$-neighborhood of $\Gamma$.
We partition this neighborhood into essentially rectangular regions $\theta$ of dimensions roughly $R^{-1/2}$ and $1/R$. We also partition the $1/R^{1/2}$-neighborhood into  sets $\tau$, with dimensions roughly $R^{-1/4}$ and $R^{-1/2}$. Each $\theta$ sits inside some $\tau$.
\smallskip

Let $D,M,P$ be dyadic parameters, with $D$ and $P$  integers. We decompose $\mu$
$$\mu=\sum_{D,M,P}\mu_{D,M,P},$$
with each $\mu_{D,M,P}$ satisfying the following properties:
\\
\\
(P1) for each $\theta$, either $\mu_{D,M,P}(\theta)=\mu(\theta)\sim MR^{-s}$, or  $\mu_{D,M,P}(\theta)=0$. We call $\theta$ {\em active} if it falls into the first category.
\\
\\
(P2) each $\tau$ contains either $\sim P$ or no active $\theta$. We call $\tau$ {\em active} if it fits into the first category.
\\
\\
(P3) there are $\sim D$ active $\theta's$, or equivalently, there are $\sim D/P$ active $\tau's$.

We note that these properties together with \eqref{e4} force the following inequalities
\begin{equation}
\label{e5}
M\lesssim R^{s/2}
\end{equation}
\begin{equation}
\label{e6}
DM\lesssim R^s
\end{equation}
\begin{equation}
\label{e7}
MP\lesssim R^{3s/4}.
\end{equation}
\smallskip
The parameter $M$ will guarantee  good control over the distribution of the active $\theta$'s, which will in turn deliver the much needed discretized structure for our tubes.
\smallskip

For the rest of the argument we fix $D,M,P$ and let $F=R^{s-2}\widehat{\mu_{D,M,P}*\psi_{1/R}}$ be the $L^\infty$ upper normalized version, $\|F\|_\infty\lesssim 1$. Invoking the triangle inequality, it suffices to prove that for some $\beta>0$ we have
\begin{equation}
\label{e3}
\|F\|_{L^6}^6\lesssim R^{4s-10-\beta}.
\end{equation}
This is because only $O((\log R)^{3})$ of the 3-tuples $(D,M,P)$ are relevant to the summation. Indeed, we have $O(\log R)$ possible choices for each of  $D$ and $P$. The contribution from $M=O(R^{-100})$ is negligible, as it becomes transparent from the main argument that follows. Given the upper bound  \eqref{e5}, we conclude that there are only $O(\log R)$ relevant values for $M$.
\smallskip

Write $\mu_\theta$ for the restriction of $\mu$ to an active  $\theta$.
Let $F_\theta=R^{s-2}\widehat{\mu_\theta*\psi_{1/R}}$ be the Fourier restriction of $F$ to $\theta$, so that
$F=\sum_\theta F_\theta$.
We will use the following spatial decomposition for $F_\theta$, see e.g. Chapter 2 in \cite{book}
$$F_\theta=\sum_{T\in\Tc_\theta}\langle F_\theta,W_T\rangle W_T+O(R^{-100}).$$
Each $T$ is a rectangle (referred to as tube) with dimensions $R^{1/2}$ and $R$, with the long side pointing in the direction normal to $\theta$. The term $O(R^{-100})$ is negligible, and can be dismissed. Its role is to ensure that all $T$ sit inside, say,  $[-R,R]^2$. The wave packet $W_T$ is $L^2$ normalized, has spectrum inside (a slight enlargement of) $\theta$, and is essentially concentrated spatially in (a slight enlargement) of $T$. Thus, with $\chi_T$ being a smooth approximation of $1_T$, we have
$$|W_T|\lesssim R^{-3/4}\chi_T.$$

We group the tubes $T$ according to the magnitude of $|\langle F_\theta,W_T\rangle|$. Given the dyadic parameter $\lambda$ we write
$$\Tc_{\lambda,\theta}=\{T\in\Tc_\theta:\;|\langle F_\theta,W_T\rangle|\sim \lambda\}.$$

We record two key properties. The first one shows that the largest possible value $\lambda_{max}$ of $\lambda$ satisfies
\begin{equation}
\label{e15}
\lambda_{max}\sim MR^{-\frac54}.
\end{equation}
\begin{pr}
For each $T\in\Tc_{\theta}$ we have
$$|\langle F_\theta,W_T\rangle|\lesssim MR^{-\frac54}.$$ 	
\end{pr}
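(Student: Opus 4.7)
The plan is to bound the pairing on the spatial side via a simple Hölder inequality, using an $L^\infty$ estimate on $F_\theta$ and an $L^1$ estimate on $W_T$. Both bounds follow directly from the definitions and from the standard quantitative description of the wave packet recalled just above the proposition.

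First I would estimate $\|F_\theta\|_\infty$. Since $F_\theta=R^{s-2}\widehat{\mu_\theta*\psi_{1/R}}$, the trivial bound $|\widehat{f}(x)|\le \|f\|_1$ gives
$$\|F_\theta\|_\infty\le R^{s-2}\,\|\mu_\theta\|_1\,\|\psi_{1/R}\|_1.$$
Because $\theta$ is active, property (P1) forces $\mu_\theta(\R^2)=\mu(\theta)\sim MR^{-s}$, while $\|\psi_{1/R}\|_1=\|\psi\|_1\lesssim 1$ by the normalization of $\psi$. Hence
$$\|F_\theta\|_\infty\lesssim MR^{-2}.$$

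Next, I would read off the needed information about the wave packet. The pointwise bound $|W_T|\lesssim R^{-3/4}\chi_T$ is already stated in the excerpt, and $T$ has dimensions $R^{1/2}\times R$, so $|T|\sim R^{3/2}$. Since $\chi_T$ is a smooth approximation of $1_T$, its tails only cost an absolute constant, giving
$$\|W_T\|_1\lesssim R^{-3/4}\cdot R^{3/2}=R^{3/4}.$$

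Combining the two estimates yields
$$|\langle F_\theta,W_T\rangle|\le \|F_\theta\|_\infty\|W_T\|_1\lesssim MR^{-2}\cdot R^{3/4}=MR^{-5/4},$$
which is exactly the claim. I do not expect any real obstacle: this is a crude size estimate, and the normalizations have been chosen precisely so that the Frostman bound $\mu(\theta)\sim MR^{-s}$ on the spectral side matches, after the $R^{s-2}$ rescaling, the $L^2$-normalized wave packet of width $R^{1/2}$ on the spatial side. One should note that, consistently with the proposition being presented as sharp in \eqref{e15}, this argument uses \emph{only} the total mass of $\mu$ on $\theta$, and never any internal structure of $\mu_\theta$—so there is no hope (nor need) to improve it at this stage.
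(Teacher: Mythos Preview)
Your proof is correct and essentially the same as the paper's: both are a crude H\"older estimate using only the total mass $\mu(\theta)\sim MR^{-s}$ and the $L^2$-normalization of $W_T$. The only cosmetic difference is that the paper applies Plancherel first and bounds $\|\mu_\theta*\psi_{1/R}\|_1\|\widehat{W_T}\|_\infty$ on the spectral side, whereas you stay on the spatial side and bound $\|F_\theta\|_\infty\|W_T\|_1$; these are dual versions of the same inequality and yield the identical numerology.
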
	
\begin{proof}
We have
$$R^{2-s}|\langle F_\theta,W_T\rangle|=|\langle \mu_\theta*\psi_{1/R},\widehat{W_T\rangle}|\le \|\mu_\theta*\psi_{1/R}\|_1\|\widehat{W_T}\|_{\infty}\lesssim \|\mu_\theta\|_1R^{3/4}\sim MR^{\frac34-s}.$$
\end{proof}

The second property is about the fractal-like distribution of the tubes $T$ inside boxes. This is essentially Lemma 3.3 in \cite{O}. We sketch a proof for the convenience of the reader.

\begin{pr}Let $\Delta\in [R^{1/2},R]$.
For each rectangle $B$ with dimensions $\Delta$ and $R$, and with orientation identical to that of the tubes in $\Tc_\theta$, we have  the estimate
\begin{equation}
\label{e12}
N_B=|\{T\in\Tc_{\lambda,\theta}:\; T\subset B\}|\lesssim (\frac{\Delta}{\sqrt{R}})^{1-s}\frac{MR^{\frac{s-5}{2}}}{\lambda^2}.
\end{equation}
\end{pr}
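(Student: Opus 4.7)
The plan is to convert $N_B$ into an $L^2(B)$ estimate for $F_\theta$ via approximate orthogonality of wave packets, and then evaluate that $L^2$ integral via Plancherel, the Frostman bound, and the curvature of $\Gamma$.

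\smallskip

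First I would use that the wave packets $\{W_T\}_{T\in\Tc_\theta}$ form an essentially orthonormal family and that each $W_T$ is spatially concentrated on $T$. For any $T\subset B$ this gives $\langle F_\theta, W_T\rangle\approx \langle F_\theta \mathbf 1_B, W_T\rangle$ up to tails that can be absorbed into the running $R^\epsilon$ budget. Bessel's inequality then yields
$$\lambda^2 N_B\lesssim\sum_{T\in\Tc_{\lambda,\theta},\,T\subset B}|\langle F_\theta, W_T\rangle|^2\lesssim \|F_\theta\|_{L^2(B)}^2.$$
Since $F_\theta=R^{s-2}\widehat{\psi}(\cdot/R)\,\widehat{\mu_\theta}$ with $|\widehat\psi(x/R)|\lesssim 1$ on the relevant scale, it suffices to bound $R^{2s-4}\int_B|\widehat{\mu_\theta}|^2$. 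Introduce a Schwartz envelope $\chi_B\ge \mathbf 1_B$ whose Fourier transform is adapted to the dual rectangle $B^*$ of dimensions $\Delta^{-1}\times R^{-1}$, with the long side $\Delta^{-1}$ along the tangential direction $e_1$ to $\Gamma$ and the short side $R^{-1}$ along the normal direction $e_2$. Plancherel and Fubini then give
$$\int_B|\widehat{\mu_\theta}|^2\lesssim \int\!\!\int \widehat{\chi_B}(\xi-\eta)\,d\mu_\theta(\xi)\,d\mu_\theta(\eta),$$
and $|\widehat{\chi_B}|$ is controlled by $|B|=\Delta R$ times a Schwartz tail adapted to $B^*$.

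\smallskip

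The geometric input is now curvature. For $\xi,\eta\in\Gamma\cap\theta$ the hypothesis $\min|\gamma''|>0$ forces $|(\xi-\eta)\cdot e_2|\lesssim|\xi-\eta|^2$. Whenever the tangential Schwartz factor requires $|\xi-\eta|\lesssim \Delta^{-1}$, the normal component automatically satisfies $|(\xi-\eta)\cdot e_2|\lesssim \Delta^{-2}\le R^{-1}$ because $\Delta\ge R^{1/2}$, so the normal factor in $\widehat{\chi_B}$ is a harmless $O(1)$. The double integral collapses to
$$\Delta R\cdot\int \mu_\theta(B(\xi,\Delta^{-1}))\,d\mu_\theta(\xi)\lesssim \Delta R\cdot \Delta^{-s}\cdot \mu(\theta)\sim MR^{1-s}\Delta^{1-s},$$
where I used the Frostman bound $\mu(B(\xi,\Delta^{-1}))\lesssim \Delta^{-s}$ and $\mu(\theta)\sim MR^{-s}$ from (P1). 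Multiplying by $R^{2s-4}$ and dividing by $\lambda^2$ gives $N_B\lesssim (\Delta/\sqrt R)^{1-s}\,MR^{(s-5)/2}/\lambda^2$, as required.

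\smallskip

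The step that demands the most care is the curvature estimate: it is precisely there that both hypotheses on the rectangle ($\Delta\ge R^{1/2}$) and on the curve (nonzero $\gamma''$) are consumed. If either failed, the normal direction of $B^*$ would no longer be swallowed for free and the argument would have to retain an extra factor from the short side of $B^*$, degrading the bound.
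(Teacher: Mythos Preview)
Your proof is correct and follows the same skeleton as the paper's: convert $N_B$ to an $L^2(B)$ bound for $F_\theta$ via wave packet orthogonality, then estimate that $L^2$ norm by Plancherel together with the Frostman bound and the total mass $\mu(\theta)\sim MR^{-s}$. The bookkeeping differs: the paper writes $\|F_\theta\eta_B\|_2^2=R^{2s-4}\|\mu_\theta*\psi_{1/R}*\widehat{\eta_B}\|_2^2$ and bounds this by $L^1$--$L^\infty$ interpolation, while you unfold the bilinear form $\iint\widehat{\chi_B}(\xi-\eta)\,d\mu_\theta d\mu_\theta$ directly. More notably, the paper simply uses that $\widehat{\eta_B}$ is supported in the isotropic ball $B(0,\Delta^{-1})$ (which contains the true dual rectangle $\Delta^{-1}\times R^{-1}$), so the Frostman ball of radius $\Delta^{-1}$ appears immediately and no curvature is invoked. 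Your anisotropic argument is correct but unnecessary here; this particular estimate does not actually consume the nonzero curvature hypothesis.

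One small correction: since $e_2$ is the normal at a \emph{fixed} point of $\theta$ (the direction of $B$) rather than at $\eta$, the right bound is $|(\xi-\eta)\cdot e_2|\lesssim R^{-1/2}|\xi-\eta|$, not $|\xi-\eta|^2$; the tangent direction drifts by $O(R^{-1/2})$ across $\theta$. This still gives $\lesssim R^{-1/2}\Delta^{-1}\le R^{-1}$ once $|\xi-\eta|\lesssim\Delta^{-1}$ and $\Delta\ge R^{1/2}$, so your conclusion is unaffected.
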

\begin{proof}
Let $\eta_B$ be a smooth approximation of $1_B$ with compact support inside $B(0,1/\Delta)$. Using $L^2$ orthogonality and ignoring again negligible terms,  we get
\begin{equation}
\label{e10}
N_B\lambda^2\sim \|\sum_{T\in\Tc_{\lambda,\theta}:\; T\subset B}\langle F_\theta,W_T\rangle W_T\|_2^2\lesssim \|F_\theta\eta_B\|_2^2=R^{2s-4}\|\mu_\theta*\psi_{1/R}*\widehat{\eta_B}\|_2^2.
\end{equation}
Using that $|\widehat{\eta_B}|\lesssim |B|1_{B(0,1/\Delta)}=\Delta R1_{B(0,\Delta^{-1})}$ and $|\psi_{1/R}|\lesssim R^21_{B(0,1/R)}$, we may write (since $\Delta^{-1}\ge 1/R$)
$$\psi_{1/R}*|\widehat{\eta_B}|\lesssim \Delta R1_{B(0,2\Delta^{-1})}.$$
Combining this with  \eqref{e4} at scale $r=\Delta^{-1}\le R^{-1/2}$ we find
\begin{equation}
\label{e8}
\|\mu_\theta*\psi_{1/R}*\widehat{\eta_B}\|_{\infty}\lesssim \Delta R\Delta^{-s}.
\end{equation}
On the other hand
\begin{equation}
\label{e9}
\|\mu_\theta*\psi_{1/R}*\widehat{\eta_B}\|_{1}\le \mu(\theta)\|\psi_{1/R}\|_1\|\widehat{\eta_B}\|_{1}\lesssim \mu(\theta)\sim MR^{-s}.
\end{equation}
Interpolating \eqref{e8} and \eqref{e9} leads to
$$\|\mu_\theta*\psi_{1/R}*\widehat{\eta_B}\|_2^2\lesssim M(\Delta R)^{1-s}.$$
The conclusion follows by combining this with \eqref{e10}
$$N_B\lesssim \lambda^{-2}R^{2s-4}(\Delta R)^{1-s}=(\frac{\Delta}{\sqrt{R}})^{1-s}\frac{MR^{\frac{s-5}{2}}}{\lambda^2}.$$
\end{proof}
\medskip

\subsection{Regular decoupling implies no $R^\beta$ gain}

Recall that we need to prove \eqref{e3}. Let us start by emphasizing the relevance of the gain coming from $\beta>0$. To do so, we first prove the estimate without this gain, in fact we will tolerate arbitrarily small losses
\begin{equation}
\label{e13}
\|F\|_{L^6}^6\lesssim_\epsilon R^{4s-10+\epsilon}.
\end{equation}
 All we need from \eqref{e12} to argue this is the estimate when $B$ is the square $[-R,R]^2$. Recall that all $T$ in $\Tc_{\lambda,\theta}$ are subsets of this square, so \eqref{e12} with $\Delta\sim R$ gives
\begin{equation}
\label{e11}
|\cup_{\theta}\Tc_{\lambda,\theta}|\lesssim \frac{MD}{\lambda^2R^2}.
\end{equation}
Recall also the decomposition
$$F=\sum_{\lambda\le \lambda_{max}}\sum_{\theta\text{ active}}\sum_{T\in\Tc_{\lambda,\theta}}\langle F_\theta,W_T\rangle W_T.$$
Then \eqref{e13} will follow once we prove that for each $\lambda\le \lambda_{max}$ we have
\begin{equation}
\label{e16}
\|\sum_{\theta\text{ active}}\sum_{T\in\Tc_{\lambda,\theta}}\langle F_\theta,W_T\rangle W_T\|_6^6\lesssim_\epsilon R^{4s-10+\epsilon}.
\end{equation}
This is indeed enough to imply \eqref{e13}, since the smaller values of $\lambda$ lead to negligible contributions. To prove this estimate, we first use decoupling, Theorem \ref{t2} (and H\"older's inequality combined with property (P3)), then \eqref{e11}, followed by \eqref{e15}
\begin{align*}
\|\sum_{\theta\text{ active}}\sum_{T\in\Tc_{\lambda,\theta}}\langle F_\theta,W_T\rangle W_T\|_6^6&\lesssim_\epsilon R^\epsilon D^2\sum_{\theta\text{ active}}\|\sum_{T\in\Tc_{\lambda,\theta}}\langle F_\theta,W_T\rangle W_T\|_6^6\\&\lesssim_\epsilon R^\epsilon D^2R^{-9/2}\lambda^6\sum_{\theta\text{ active}}\|\sum_{T\in\Tc_{\lambda,\theta}}\chi_T\|_6^6\\&\lesssim_\epsilon R^\epsilon D^2R^{-9/2}\lambda^6R^{3/2}|\cup_{\theta}\Tc_{\lambda,\theta}|\\&\lesssim_\epsilon R^\epsilon\frac{MD^3\lambda^4}{R^{5}}\\&
\lesssim_\epsilon R^\epsilon\frac{M^5D^3}{R^{10}}.
\end{align*}
Finally, invoking \eqref{e5} and \eqref{e6} we have
$$M^5D^3=(MD)^3M^2\lesssim R^{4s}.$$
The combination of the last two inequalities proves the desired conclusion \eqref{e16}.
\medskip

\begin{re}
	\label{r7}
This argument produces the gain $R^\beta$, if at least one of the following three scenarios holds
\\
\\
(S1) we restrict the summation to values of $\lambda$ slightly smaller than $\lambda_{max}$
\\
\\
(S2)  $M$ is slightly smaller than $R^{s/2}$
\\
\\
(S3) $D$ is slightly smaller than $R^{s/2}$.
\smallskip

In the next sections we will deal with the remaining scenario. More precisely, we may assume that  there is some small enough (to be chosen in the proof of Theorem \ref{t5}) $\alpha>0$ such that
\begin{equation}
\label{e25}
\lambda\ge \frac{\lambda_{max}}{R^\alpha}=\frac{M}{R^{\alpha+\frac54}}
\end{equation}

\begin{equation}
\label{e22}
M\ge R^{\frac{s}2-\alpha}
\end{equation}
\begin{equation}
\label{e23}
D\ge R^{\frac{s}2-\alpha}.
\end{equation}
The choice of $\alpha$ will in the end determine the value of $\beta$ in Theorem \ref{t1}.

For the remainder of the paper we fix such $\lambda,M,D$. We record two consequences, for later use
\begin{equation}
\label{e35}
D\lesssim R^{\frac{s}2+\alpha}\;\;\text{(due to \eqref{e6} and \eqref{e22})}
\end{equation}

\begin{equation}
\label{e36}
|\Tc=\cup_{\theta}\Tc_{\lambda,\theta}|\lesssim R^{\frac12+4\alpha}\;\;\text{(due to \eqref{e11}, \eqref{e25}, \eqref{e22} and \eqref{e35}}).
\end{equation}

To make the refined argument work, we will need to also use the parameter $P$.
\end{re}

\subsection{Refined decoupling and the improved estimate for light squares}
We tile the plane with squares $q$ with side length $\sqrt{R}$. We define
$$\Qc_{heavy}=\{q:\;q\text{ intersects at least }R^{\frac{s}{2}-\alpha}\;\;\text{tubes }T\in\Tc\}.$$
Also, $\Qc_{light}$ are the remaining squares. We first deal with this latter collection, using refined decoupling.
\begin{te}[\cite{Gu}]
\label{t29}Assume $F$ has spectrum inside the $1/R$-neighborhood of $\Gamma$, and the wave packet decomposition $F=\sum_{T\in\T}a_TW_T$.
Let $\Qc$ be a collection of pairwise disjoint squares with side length $\sqrt{R}$. Assume (the slight enlargement of) each $q\in\Qc$ intersects at most N tubes in $\T$.
	Then
	$$\|F\|_{L^6(\cup_{q\in\Qc}q)}\lesssim_\epsilon R^\epsilon N^{\frac13}(\sum_\theta\|F_\theta\|_{L^6(\R^2)}^6)^{1/6}.$$
\end{te}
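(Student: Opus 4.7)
My plan is to follow the standard refined decoupling template, combining Theorem \ref{t2} with Hölder's inequality to pass from an $\ell^2$ sum over $\theta$ to an $\ell^6$ sum, paying a factor $N^{1/3}$ that reflects the hypothesis on $N$.

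First, I would fix $q \in \Qc$ and apply the local $L^6$ decoupling on $q$, which is valid since $F$ has spectrum in the $1/R$-neighborhood of $\Gamma$ and $q$ is a square of side $\sqrt{R}$:
\[
\|F\|_{L^6(q)} \lesssim_\epsilon R^\epsilon \left(\sum_\theta \|F_\theta\|_{L^6(w_q)}^2\right)^{1/2}.
\]

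Next, I would exploit the hypothesis on $N$ as follows. The $\theta$'s for which $\|F_\theta\|_{L^6(w_q)}$ is non-negligible are precisely those having at least one tube $T\in\Tc_\theta$ from the given collection that intersects $q$; for the rest, the relevant wave packets are concentrated on tubes far from $q$, and Schwartz decay of $W_T$ and $w_q$ forces a negligible contribution. Since there are at most $N$ tubes intersecting $q$, and distinct active $\theta$'s contribute distinct tubes, the number of non-negligible terms in the $\theta$-sum is at most $N$. Hölder's inequality with exponents $3$ and $3/2$ then gives
\[
\sum_\theta \|F_\theta\|_{L^6(w_q)}^2 \lesssim N^{2/3} \left(\sum_\theta \|F_\theta\|_{L^6(w_q)}^6\right)^{1/3}.
\]

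Combining the two inequalities, raising to the sixth power, and summing over $q \in \Qc$---using that the Schwartz weights $\{w_q\}$ have bounded overlap, so that $\sum_q \|F_\theta\|_{L^6(w_q)}^6 \lesssim \|F_\theta\|_{L^6(\R^2)}^6$---I would arrive at
\[
\|F\|_{L^6(\cup_q q)}^6 \lesssim_\epsilon R^{6\epsilon} N^2 \sum_\theta \|F_\theta\|_{L^6(\R^2)}^6,
\]
from which the theorem follows by extracting the sixth root and renaming $\epsilon$.

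The main (minor) subtlety is justifying that inactive $\theta$'s contribute only a polynomially small amount to the $\ell^2$ sum, so that the Hölder step genuinely sees at most $N$ terms. This is standard bookkeeping of Schwartz tails for both the wave packets $W_T$ and the weights $w_q$, and the resulting loss is easily absorbed into the $R^\epsilon$ factor.
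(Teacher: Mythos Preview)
The paper does not give its own proof of this statement; it is quoted from \cite{Gu} and used as a black box. Your proposed argument, however, contains a genuine gap in the very first step.

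Theorem \ref{t2} decouples a function with spectrum in the $1/R$-neighborhood of $\Gamma$ into caps $\theta$ of width $\sim R^{-1/2}$, but the local version is stated (and valid) on balls of radius $R$. If you localize to a square $q$ of side $\sqrt{R}$, then the spectrum lies in the $R^{-1/2}$-neighborhood of $\Gamma$, and Theorem \ref{t2} (with $R$ replaced by $\sqrt{R}$) only yields decoupling into the coarser caps $\tau$ of width $\sim R^{-1/4}$. This is exactly what the paper does in Proposition \ref{p5}. The inequality you write,
\[
\|F\|_{L^6(q)} \lesssim_\epsilon R^\epsilon \Big(\sum_\theta \|F_\theta\|_{L^6(w_q)}^2\Big)^{1/2},
\]
is \emph{not} a consequence of Theorem \ref{t2}: at spatial scale $\sqrt{R}$ the uncertainty principle blurs adjacent $\theta$'s, and many $F_\theta$ inside a common $\tau$ can interfere constructively on $q$ in a way that the $\ell^2$ sum does not control. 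If this step were valid, refined decoupling would be an immediate corollary of ordinary decoupling, and there would be nothing to prove in \cite{Gu}.

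The proof in \cite{Gu} instead proceeds by induction on scales: one first decouples on $q$ into the intermediate caps $\tau$, then rescales each $F_\tau$ so that the wave packets become thinner tubes at a smaller value of $R$, counts how many of those pass through the rescaled squares, and invokes the inductive hypothesis. The factor $N^{1/3}$ emerges as the product of the H\"older losses accumulated across the scales, which is why the hypothesis on $N$ must be fed into every step of the induction rather than used once at the end.
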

The advantage of this over the  regular decoupling (Theorem \ref{t2}) used in the previous section is that $N^{\frac13}$ replaces the ``H\"older loss" $D^{\frac{1}3}$. While $D$ could potentially be as large as $R^{\frac{s}{2}+\alpha}$ (see \eqref{e35}), localizing to $\Qc_{light}$ allows us to use the critically smaller value $N=R^{\frac{s}{2}-\alpha}$.
\begin{te}
\label{t9}
We have
$$\|\sum_{\theta\text{ active}}\sum_{T\in\Tc_{\lambda,\theta}}\langle F_\theta,W_T\rangle W_T\|_{L^6(\cup_{q\in\Qc_{light}}q)}^6\lesssim_\epsilon R^{\epsilon-2\alpha}{R^{4s-10}}.$$
\end{te}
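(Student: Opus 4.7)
The plan is to apply the refined decoupling Theorem \ref{t29} to the function
$$G=\sum_{\theta\text{ active}}\sum_{T\in\Tc_{\lambda,\theta}}\langle F_\theta,W_T\rangle W_T,\qquad G_\theta:=\sum_{T\in\Tc_{\lambda,\theta}}\langle F_\theta,W_T\rangle W_T,$$
localized to $\cup_{q\in\Qc_{light}}q$. By the very definition of $\Qc_{light}$, every such square meets at most $N:=R^{s/2-\alpha}$ tubes from $\Tc=\cup_\theta\Tc_{\lambda,\theta}$, so Theorem \ref{t29} yields
$$\|G\|_{L^6(\cup_{q\in\Qc_{light}}q)}^6\lesssim_\epsilon R^\epsilon N^2\sum_{\theta\text{ active}}\|G_\theta\|_{L^6(\R^2)}^6.$$
The point is that $N^2=R^{s-2\alpha}$ beats the ``H\"older loss'' $D^2$ of regular decoupling by exactly the factor $R^{-2\alpha}$, thanks to \eqref{e35}. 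This is the single source of the sought gain.

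Next I would bound each $\|G_\theta\|_6^6$ by purely pointwise wave packet considerations. Using $|W_T|\lesssim R^{-3/4}\chi_T$, $|\langle F_\theta,W_T\rangle|\sim\lambda$, and the fact that the tubes in $\Tc_{\lambda,\theta}$ are parallel and essentially disjoint (so that $\|\sum_{T\in\Tc_{\lambda,\theta}}\chi_T\|_6^6\lesssim R^{3/2}|\Tc_{\lambda,\theta}|$), I get
$$\|G_\theta\|_6^6\lesssim \lambda^6 R^{-9/2}\cdot R^{3/2}|\Tc_{\lambda,\theta}|=\lambda^6 R^{-3}|\Tc_{\lambda,\theta}|.$$
Summing over active $\theta$ and inserting \eqref{e11} yields
$$\sum_{\theta\text{ active}}\|G_\theta\|_6^6\lesssim \lambda^6 R^{-3}\cdot\frac{MD}{\lambda^2R^2}=\frac{\lambda^4 MD}{R^5}.$$

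To finish, I would plug in the upper bounds for $\lambda$, $M$, $D$. From $\lambda\le\lambda_{max}\sim MR^{-5/4}$ in \eqref{e15} we get $\lambda^4\lesssim M^4R^{-5}$, hence $\sum_\theta\|G_\theta\|_6^6\lesssim M^5D R^{-10}$. The crucial packaging is $M^5D=M^4\cdot(MD)$, which combined with \eqref{e5} and \eqref{e6} gives $M^5D\lesssim R^{2s}\cdot R^s=R^{3s}$. Therefore
$$\|G\|_{L^6(\cup_{q\in\Qc_{light}}q)}^6\lesssim_\epsilon R^\epsilon\cdot R^{s-2\alpha}\cdot R^{3s-10}=R^{4s-10+\epsilon-2\alpha},$$
which is the claimed inequality. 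The only real obstacle is bookkeeping: one must organize the factors of $M$ and $D$ so as to invoke $MD\lesssim R^s$ rather than the weaker $D\lesssim R^{s/2+\alpha}$, so that the refined decoupling gain $R^{-2\alpha}$ is not eaten up by the substitution. Everything else is a repetition of the steps in the regular-decoupling computation of the previous subsection, only with $D^2$ replaced by $N^2$.
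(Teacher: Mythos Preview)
Your proof is correct and follows the paper's argument essentially line for line: refined decoupling with $N=R^{s/2-\alpha}$, then the pointwise bound $|W_T|\lesssim R^{-3/4}\chi_T$, then \eqref{e11}, then $\lambda\le\lambda_{max}$, and finally the grouping $M^5D=M^4(MD)\lesssim R^{3s}$ via \eqref{e5}--\eqref{e6}. Your closing remark about why one should package $M^5D$ as $M^4(MD)$ is exactly the point, and the paper does the same (writing $M^4(MD)$ explicitly in its chain of inequalities).
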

\begin{proof}
We repeat the computation from the previous section, incorporating the gain from $N$.
\begin{align*}
\|\sum_{\theta\text{ active}}\sum_{T\in\Tc_{\lambda,\theta}}\langle F_\theta,W_T\rangle W_T\|_{L^6(\cup_{q\in\Qc_{light}}q)}^6&\lesssim_\epsilon R^\epsilon R^{s-2\alpha}\sum_{\theta\text{ active}}\|\sum_{T\in\Tc_{\lambda,\theta}}\langle F_\theta,W_T\rangle W_T\|_6^6\\&\lesssim_\epsilon R^\epsilon R^{s-2\alpha-9/2}\lambda^6\sum_{\theta\text{ active}}\|\sum_{T\in\Tc_{\lambda,\theta}}\chi_T\|_6^6\\&\lesssim_\epsilon R^\epsilon R^{s-2\alpha-9/2}\lambda^6R^{3/2}|\cup_{\theta}\Tc_{\lambda,\theta}|\\&\lesssim_\epsilon R^\epsilon R^{s-2\alpha}\frac{MD\lambda^4}{R^{5}}\\&\lesssim_\epsilon R^\epsilon R^{s-2\alpha}\frac{M^4(MD)}{R^{10}}\\&\lesssim_\epsilon R^{\epsilon-2\alpha}{R^{4s-10}}.
\end{align*}
We have used \eqref{e5} and \eqref{e6}. The exponent gain $\beta=2\alpha-\epsilon$ is positive, if $\epsilon$ is chosen small enough.

\end{proof}
\smallskip

In the remainder of the paper we will deal with $\Qc_{heavy}$. The gain in this case will come once we prove that this collection has small cardinality.

\subsection{Discrete energy estimates, easy and hard}
\label{s4}
This section provides some intuition for the following sections.
Consider a collection $\Pc$ of $\delta$-separated points in $\R^n$. We may define the 2 and 3-energies of $\Pc$ at scale $\delta$ via
$$\E_{2,\delta}(\Pc)=|\{(p_1,p_2,p_3,p_4)\in\Pc^4:\;\dist(p_1+p_2,p_3+p_4)\lesssim \delta\}|,$$
$$\E_{3,\delta}(\Pc)=|\{
(p_1,p_2,p_3,p_4,p_5,p_6)\in\Pc^6:\;\dist(p_1+p_2+p_3,p_4+p_5+p_6)\lesssim \delta\}|.$$
We can also use integrals to measure essentially the same quantities
$$\E_{k,\delta}(\Pc)\approx \frac{1}{|B(0,\delta^{-1})|}\int_{B(0,\delta^{-1})}|\sum_{p\in \Pc}e(p\cdot x)|^{2k}dx.$$
It is rather immediate that
$$\E_{2,\delta}(\Pc)\lesssim |\Pc|^3,$$
\begin{equation}
\label{e20}
\E_{3,\delta}(\Pc)\lesssim |\Pc|^5.
\end{equation}
If we adopt the integral formulation, it is also immediate that
$$\E_{3,\delta}(\Pc)\lesssim |\Pc|^2\E_{2,\delta}(\Pc).$$
These estimates are sharp e.g. when the points lie in a straight line, and are $\delta$-close to an arithmetic progression. However, better estimates are known if the set $\Pc$ is regular, as follows.
\begin{de}\label{d1}
	Given $0<\delta,s<1$,
a $\delta$-separated  set of points $\Pc$ in $[-1,1]$ is $(\delta,s)$-regular if

(i) for each interval $I\subset [0,1]$ with length at least $\delta$ we have $|\Pc\cap I|\lesssim (\frac{|I|}{\delta})^s$

(ii) if in addition the interval is centered at some $p\in \Pc$, then we have   $|\Pc\cap I|\sim (\frac{|I|}{\delta})^s$.	
\end{de}

We have the following discrete reformulation of Theorem \ref{tlasst}.
\begin{te}
\label{t101}
Assume $\Pc$ is $(\delta,s)$-regular.
There is $\eta>0$ depending on $s$ such that
$$\E_{2,\delta}(\Pc)\lesssim |\Pc|^{3-\eta},$$
and thus
$$\E_{3,\delta}(\Pc)\lesssim |\Pc|^{5-\eta}.$$
\end{te}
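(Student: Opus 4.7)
The plan is to deduce Theorem \ref{t101} from the continuous AD-regular energy estimate of Theorem \ref{tlasst}, by passing from the discrete set $\Pc$ to an associated probability measure $\nu$ that is AD-regular at scales $\ge\delta$.

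First I would observe that applying condition (ii) of Definition \ref{d1} to an interval of length $\sim 1$ centered at a fixed point of $\Pc$ gives $|\Pc|\sim \delta^{-s}$. I then associate to $\Pc$ the probability measure
$$\nu = \frac{1}{|\Pc|\,\delta}\sum_{p\in \Pc} 1_{[p-\delta/2,\,p+\delta/2]}(x)\,dx.$$
The upper bound (i) of Definition \ref{d1} combined with $|\Pc|\sim\delta^{-s}$ gives $\nu(B(y,r))\lesssim r^s$ for all $y\in\R$ and all $r\ge \delta$, while (ii) supplies the matching lower bound $\nu(B(y,r))\gtrsim r^s$ whenever $y$ lies in the support of $\nu$. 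A harmless mollification of $\nu$ at scale $\delta$ upgrades this to AD-regularity across all scales in $[\delta,1]$, without affecting $\Ec_2(\nu,\delta)$ by more than a constant factor, since the $2$-energy at scale $\delta$ is insensitive to perturbations of $\nu$ at scales below $\delta$.

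Next I would unfold definitions to compare the discrete and continuous $2$-energies. Expanding $\Ec_2(\nu,\delta)$ as a sum over quadruples $(p_1,p_2,p_3,p_4)\in\Pc^4$ and using that each of the defining $\delta$-intervals has diameter $\delta$, one checks that each quadruple contributes $\sim |\Pc|^{-4}$ precisely when $\dist(p_1+p_2,\,p_3+p_4)\lesssim \delta$, and contributes negligibly otherwise. This yields
$$\Ec_2(\nu,\delta)\sim |\Pc|^{-4}\,\E_{2,\delta}(\Pc).$$
Applying Theorem \ref{tlasst} to the mollification of $\nu$ gives $\Ec_2(\nu,\delta)\lesssim \delta^{s+\beta}$ for some $\beta=\beta(s)>0$, and combining this with $|\Pc|\sim\delta^{-s}$ produces
$$\E_{2,\delta}(\Pc)\lesssim |\Pc|^4\,\delta^{s+\beta}\sim |\Pc|^{3-\beta/s},$$
which is the first claim of Theorem \ref{t101} with $\eta=\beta/s$. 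The second claim follows immediately from the elementary bound $\E_{3,\delta}(\Pc)\lesssim |\Pc|^2\,\E_{2,\delta}(\Pc)$ recorded earlier in Section \ref{s4}.

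The main obstacle is the handling of the transition scale $\delta$: the natural $\nu$ associated to $\Pc$ is AD-regular only at scales $\ge \delta$, so invoking Theorem \ref{tlasst} requires either the mollification step indicated above, or a version of that theorem formulated for measures AD-regular on $[\delta,1]$. Either route is routine, but must be executed carefully to avoid losses in the $|\Pc|$-exponent when passing through the normalization $|\Pc|\sim \delta^{-s}$.
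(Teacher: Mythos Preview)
Your proposal is correct and is exactly the approach the paper intends: the paper states Theorem \ref{t101} as ``the discrete reformulation of Theorem \ref{tlasst}'' and gives no separate proof, so your argument simply makes explicit the passage from the continuous AD-regular energy bound to its discrete counterpart via the normalized measure $\nu$ and the identification $|\Pc|\sim\delta^{-s}$. Your caveat about Theorem \ref{tlasst} being needed only for measures AD-regular on $[\delta,1]$ is well taken and is indeed how the results in \cite{DZ}, \cite{CT} are formulated.
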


\medskip

\subsection{Bush estimates}

A bush is a collection of  wave packets $W_T$, satisfying two properties. The spectrum of each $W_T$ needs to be inside an active $\theta$. Also, the underlying tubes $T$ intersect a fixed square $q$ with side length  $R^{1/2}$ (we will say that $\Bc$ is centered at $q$). It is easy to see that we may have at most $O(1)$ tubes for each active $\theta$.

\begin{pr}
\label{p5}	
For each bush $\Bc$ centered at $q$ and for all coefficients $a_T\in\C$ we have
$$\|\sum_{T\in \Bc}a_TW_T\|_{L^6(q)}^6\lesssim_\epsilon R^{\epsilon-7/2}D^3P^2\|a_T\|_{l^\infty}^6.$$
\end{pr}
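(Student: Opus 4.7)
The plan is to apply $L^6$-decoupling at the coarser scale $\sqrt{R}$, passing from the $\theta$'s to the $\tau$'s, and then estimate each resulting piece $F_\tau$ by interpolating an $L^\infty$ bound against an $L^2$-orthogonality bound. The parameter $P$ will govern the per-$\tau$ estimate, while the count $\sim D/P$ of active $\tau$'s will absorb the H\"older loss arising from the decoupling. This is natural, since the bush structure forces $|\Bc_\tau|\lesssim P$ and limits the number of active $\tau$'s meeting $\Bc$ to $\lesssim D/P$, and these are precisely the two structural constraints that make the geometry on $q$ tractable.

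First I would invoke the local version of Theorem \ref{t2} on the square $q$ at scale $\sqrt R$: since $F:=\sum_{T\in\Bc} a_T W_T$ has spectrum in the $1/R$-neighborhood of $\Gamma$, and hence in its $R^{-1/2}$-neighborhood, the decoupling yields
$$\|F\|_{L^6(q)}^2\lesssim_\epsilon R^\epsilon \sum_\tau \|F_\tau\|_{L^6(w_q)}^2,\qquad F_\tau:=\sum_{T\in\Bc_\tau}a_TW_T,$$
where $\Bc_\tau$ collects those $T\in\Bc$ whose $\theta$ lies inside $\tau$. Since only $\sim D/P$ values of $\tau$ are active by property (P3), H\"older's inequality upgrades this to
$$\|F\|_{L^6(q)}^6\lesssim_\epsilon R^\epsilon (D/P)^2\sum_{\tau\text{ active}}\|F_\tau\|_{L^6(w_q)}^6.$$

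For each active $\tau$, property (P2) and the $O(1)$-tube-per-$\theta$ feature of a bush give $|\Bc_\tau|\lesssim P$, and I would then combine the elementary interpolation $\|F_\tau\|_{L^6}^6\le \|F_\tau\|_{L^\infty}^4\|F_\tau\|_{L^2}^2$ with two ingredients. The pointwise bound $|W_T|\lesssim R^{-3/4}\chi_T$ and the triangle inequality give $\|F_\tau\|_{L^\infty(w_q)}\lesssim\|a_T\|_{\ell^\infty}\, P\, R^{-3/4}$. For the $L^2$ piece, the wave packets $\{W_T\}_{T\in\Bc_\tau}$ have Fourier supports in disjoint $\theta$'s, so approximate orthogonality together with $\|W_T\|_{L^2(w_q)}^2\lesssim |T\cap q|/|T|\sim R^{-1/2}$ yields $\|F_\tau\|_{L^2(w_q)}^2\lesssim\|a_T\|_{\ell^\infty}^2\, P\, R^{-1/2}$. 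Multiplying produces $\|F_\tau\|_{L^6(w_q)}^6\lesssim \|a_T\|_{\ell^\infty}^6 P^5R^{-7/2}$; summing over the $\sim D/P$ active $\tau$'s and feeding the output back into the decoupling estimate gives
$$\|F\|_{L^6(q)}^6\lesssim R^\epsilon\, (D/P)^2\cdot(D/P)\cdot P^5R^{-7/2}\|a_T\|_{\ell^\infty}^6=R^{\epsilon-7/2}D^3P^2\|a_T\|_{\ell^\infty}^6,$$
which is the desired bound.

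The main delicate point I anticipate is the $L^2$-orthogonality step: after localizing to $q$, each $\widehat{W_T}$ is effectively smoothed at scale $R^{-1/2}$, which coincides with the long dimension of the $\theta$'s, so the smoothed spectral pieces of distinct $T\in\Bc_\tau$ acquire a small overlap. This is a standard wave packet subtlety, handled by noting that each smoothed $\theta$ overlaps only $O(1)$ of its neighbors; the resulting loss is absorbed in constants. Every other ingredient is either the triangle inequality, volume counting, or the pigeonhole inputs (P1)--(P3), so once the orthogonality is in place the estimate falls out immediately.
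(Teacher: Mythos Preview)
Your argument is correct and reaches the same endpoint as the paper, but the route you take for the per-$\tau$ estimate is genuinely different. The paper, after the identical decoupling-plus-H\"older step, expands $\|F_\tau\|_{L^6(w_q)}^6$ as a sixfold sum and observes that the nonzero contributions are counted by the discrete $3$-energy $\E_{3,R^{-1/2}}(\Pc)$ of the set $\Pc=\{p_\theta:\theta\subset\tau\text{ active}\}$; the trivial bound $\E_{3,R^{-1/2}}(\Pc)\lesssim |\Pc|^5=P^5$ then gives $\|F_\tau\|_{L^6(w_q)}^6\lesssim R^{-7/2}P^5\|a_T\|_{l^\infty}^6$. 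You instead interpolate $\|F_\tau\|_6^6\le\|F_\tau\|_\infty^4\|F_\tau\|_2^2$, feeding in the pointwise bound $\|F_\tau\|_\infty\lesssim PR^{-3/4}\|a_T\|_{l^\infty}$ and the almost-orthogonality bound $\|F_\tau\|_{L^2(w_q)}^2\lesssim PR^{-1/2}\|a_T\|_{l^\infty}^2$, landing on the same $P^5R^{-7/2}$. Your orthogonality concern is real but, as you say, harmless: localizing to $q$ smears each $\widehat{W_T}$ by $R^{-1/2}$, matching the $\theta$-separation, so the overlap is $O(1)$.

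What each approach buys: yours is more elementary and does not single out the exponent $6$ as an even integer. The paper's energy formulation, on the other hand, makes transparent \emph{why} the bound is what it is and where an improvement would come from---see Remark~\ref{r101}, which notes that under AD-regularity the $3$-energy bound of Theorem~\ref{t101} would replace $P^5$ by $P^{5-\eta}$ and immediately yield the $R^{-\beta}$ gain. That structural insight is obscured by the interpolation route.
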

\begin{proof}
The function $\sum_{T\in \Bc}a_TW_T$ has spectrum inside the $R^{-1}$-neighborhood of $\Gamma$, thus also inside the larger $R^{-1/2}$-neighborhood. We may apply the local inequality in Theorem \ref{t2}
$$\|\sum_{T\in \Bc}a_TW_T\|_{L^6(q)}\lesssim_\epsilon R^\epsilon (\sum_{\tau\text{ active}}\|\sum_{T\in \Bc\atop{T\sim\tau}}a_TW_T\|_{L^6(w_q)}^2)^{1/2}.$$
Recall that each $\tau$ is a region with diameter $\sim R^{-1/4}$. We write $T\sim \tau$ if $T$ is associated with some $\theta$ that sits inside $\tau$. According to (P3), there are $\sim D/P$ active $\tau$. Thus, H\"older's inequality leads to
$$\|\sum_{T\in \Bc}a_TW_T\|_{L^6(q)}^6\lesssim_\epsilon R^\epsilon (D/P)^3 \max_{\tau\text{ active}}\|\sum_{T\in \Bc\atop{T\sim\tau}}a_TW_T\|_{L^6(w_q)}^6.$$
We may assume each active $\theta$ inside $\tau$ contributes only one $T$, and we write $T\sim\theta$. Pick any point $p_\theta\in\theta$ and call $\Pc$ the collection of all $p_\theta$. The spectrum of $W_T$ is inside $B(p_\theta,O(R^{-1/2}))$. We may assume $w_q$ has spectrum inside $B(0,R^{-1/2})$. Note that if $T_i\sim \theta_i$ then the function $W_{T_1}W_{T_2}W_{T_3}\overline{W_{T_4}}\;\overline{W_{T_5}}\;\overline{W_{T_6}}$ has spectrum inside $B(p_{\theta_1}+p_{\theta_2}+p_{\theta_3}-p_{\theta_4}-p_{\theta_5}-p_{\theta_6},O(R^{-1/2}))$. We find that
\begin{align*}\|\sum_{T\in \Bc\atop{T\sim\tau}}a_TW_T\|_{L^6(w_q)}^6&\le \|a_T\|_{l^\infty}^6\sum_{T_1,\ldots,T_6}|\int \Fc(W_{T_1}W_{T_2}W_{T_3}\overline{W_{T_4}}\;\overline{W_{T_5}}\;\overline{W_{T_6}})\Fc(w_q) |\\&\lesssim \|a_T\|_{l^\infty}^6\E_{3,R^{-1/2}}(\Pc)\frac{|q|}{|T_1|^{1/2}\cdot\ldots\cdot|T_6|^{1/2}}\\&\lesssim R^{-7/2}\|a_T\|_{l^\infty}^6P^5.
\end{align*}
\end{proof}

\begin{re}
\label{r101}
Recall that $\tau$ is essentially a rectangle. Because of this, there is no curvature present in the estimate for $\|\sum_{T\in \Bc\atop{T\sim\tau}}a_TW_T\|_{L^6(w_q)}^6$. At this scale, the points $p_\theta$ can be thought of as lying on a line. In the absence of additional structure for these $\theta$, the best exponent we may use for the 3-energy is 5, cf. \eqref{e20}. However, if the measure $\mu$ were AD-regular, we could use the better energy estimate in Theorem \ref{t101}. Such an estimate is enough to produce the gain $R^{-\beta}$ in \eqref{e4}.
\end{re}

\subsection{Incidence estimates and Furstenberg sets}
\label{s6}

Consider the family of rectangles $\Tc=\cup_\theta \Tc_{\lambda,\theta}$. They point in at most $D$ directions, and recall that $D\lesssim R^{\frac{s}2+\alpha}$. Recall also that $|\Tc|\lesssim R^{\frac12+4\alpha}$. We are interested in finding  a good upper bound for the number of (pairwise disjoint) squares $q$ with side length $\sqrt{R}$ that intersect close to the maximum possible number of tubes.
More precisely, we have defined the family
$$\Qc_{heavy}=\{q:\;q\text{ intersects at least }R^{\frac{s}{2}-\alpha}\;\;\text{tubes }T\in\Tc\}.$$
Consider the number of incidences
$$\Ic(\Tc,\Qc_{heavy})=|\{(T,q)\in\Tc\times \Qc_{heavy}:\;T\text{ intersects }q\}|.$$
Let us first point out the following easy estimate, in the spirit of the bilinear Kakeya inequality.
\begin{te}
We have
\begin{equation}
\label{e30}
|\Qc_{heavy}|\lesssim R^{1-s}R^{10\alpha+\frac{2\alpha}{s}}.
\end{equation}
\end{te}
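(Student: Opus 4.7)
The plan is to combine a bilinear-Kakeya-type double counting with the Frostman hypothesis on $\mu$, exploiting the fact that each heavy square must see tubes in many \emph{angularly spread} direction classes. For each $q \in \Qc_{heavy}$ set $\Theta_q = \{\theta \text{ active}: \exists T \in \Tc_{\lambda,\theta} \text{ with } T\cap q \ne \emptyset\}$. Because the tubes in a single $\Tc_{\lambda,\theta}$ are parallel and of width $\sqrt{R}$, at most $O(1)$ of them can meet a given $\sqrt{R}$-square, so $|\Theta_q| \gtrsim N := R^{s/2-\alpha}$. The total $\mu$-mass of the active arcs indexed by $\Theta_q$ is then at least $N\cdot MR^{-s}\gtrsim R^{-2\alpha}$ by \eqref{e22}, and the Frostman bound \eqref{e4} forces the arc supporting $\Theta_q$ to have length at least $r_0 := R^{-2\alpha/s}$. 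Applying \eqref{e4} at the finer scale $r_0/C$ yields $|\Theta_q \cap B(\theta_\ast, r_0/C)| \lesssim N/C^s$ uniformly in $\theta_\ast \in \Gamma$, and for $C = C(s)$ large a positive fraction of the $N^2$ ordered pairs in $\Theta_q^2$ have angular separation $\gtrsim R^{-2\alpha/s}$. Call these pairs \emph{transverse}.

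Double-counting transverse pairs and estimating per-pair incidences gives
\begin{align*}
|\Qc_{heavy}| N^2 &\lesssim_s \sum_{(\theta_1,\theta_2)\text{ transv.}} |\{q \in \Qc_{heavy}: \theta_1,\theta_2 \in \Theta_q\}| \\
&\le \sum_{(\theta_1,\theta_2)\text{ transv.}} \sum_{T_i \in \Tc_{\lambda,\theta_i}} \#\{q : q \cap T_1 \cap T_2 \ne \emptyset\}.
\end{align*}
For a transverse pair, the intersection $T_1 \cap T_2$ is a parallelogram of area $\lesssim R/\alpha(T_1,T_2)$, which contains only $\lesssim 1/\alpha(T_1,T_2) \lesssim R^{2\alpha/s}$ disjoint $\sqrt{R}$-squares. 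Combined with the trivial bound $\sum_{\theta_1,\theta_2}|\Tc_{\lambda,\theta_1}||\Tc_{\lambda,\theta_2}| = |\Tc|^2 \lesssim R^{1+8\alpha}$ coming from \eqref{e36}, this gives $|\Qc_{heavy}| N^2 \lesssim_s R^{2\alpha/s}\, R^{1+8\alpha}$, and using $N^2 = R^{s-2\alpha}$ yields the stated bound $|\Qc_{heavy}|\lesssim R^{1-s+10\alpha+2\alpha/s}$.

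The delicate point is calibrating the Frostman-based lower bound on angular spread to the per-pair incidence count so that the combined loss is exactly $R^{2\alpha/s}$; the tightness of \eqref{e22} and \eqref{e4} at the scale $r_0$ is what makes the exponents line up. The pivotal geometric fact---that two transverse $\sqrt{R} \times R$ tubes meeting at angle $\alpha$ have intersection containing $O(1/\alpha)$, rather than $O(1/\alpha^2)$, disjoint $\sqrt{R}$-squares---is elementary but essential; without it the bound would weaken to $R^{4\alpha/s}$ and fail to match the claim.
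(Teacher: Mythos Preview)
Your proof is correct and follows essentially the same bilinear-Kakeya strategy as the paper: use the Frostman bound to force angular spread $\gtrsim R^{-2\alpha/s}$ among the tubes through each heavy square, then double-count using that two tubes at such an angle jointly meet $O(R^{2\alpha/s})$ many $\sqrt{R}$-squares, and finish with $|\Tc|^2\lesssim R^{1+8\alpha}$. The only difference is organizational---the paper fixes one tube $T_1$, bounds $|\{q\in\Qc_{heavy}:q\cap T_1\ne\emptyset\}|$ via a nested double count over tubes transverse to $T_1$, and then sums over $T_1$, whereas you count transverse pairs $(\theta_1,\theta_2,q)$ symmetrically in a single pass.
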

\begin{proof}
We perform a double counting argument. On the one hand we have
$$|\Qc_{heavy}|R^{\frac{s}{2}-\alpha}\lesssim \Ic(\Tc,\Qc_{heavy}).$$
On the other hand, let $\Qc(T_1)=\{q\in \Qc_{heavy}:\;q\text{ intersects }T_1\}$. We claim that for each $T_1\in\Tc$
\begin{equation}
\label{e31}
\Qc(T_1)\lesssim \frac{|\Tc|}{R^{\frac{s}{2}-\alpha-\frac{2\alpha}{s}}}.\end{equation} Let us postpone the proof of this claim to a few lines below, and record the immediate consequence
$$\Ic(\Tc,\Qc_{heavy})\lesssim \frac{|\Tc|^2}{R^{\frac{s}{2}-\alpha-\frac{2\alpha}{s}}}\lesssim R^{1-\frac{s}{2}+9\alpha+\frac{2\alpha}{s}}.$$
The two incidence estimates combine to prove the desired bound for $\Qc_{heavy}$.
\smallskip

Now back to the claim. Fix $T_1\in\Tc$.
Pick any  $q\in \Qc(T_1)$. There must be at least $\sim R^{\frac{s}{2}-\alpha}$ tubes $T$ intersecting $q$, call them $\Tc(q)$, in such a  way that the angle between $T$ and $T_1$ is $\gtrsim R^{-\frac{2\alpha}{s}}$. This is because the number of active $\theta$ (so $\mu(\theta)\sim MR^{-s}$) supported by an arc $I$ of length $\ll R^{-\frac{2\alpha}{s}}$ is
$$\lesssim \frac{\mu(I)}{MR^{-s}}\ll \frac{R^{-2\alpha}}{MR^{-s}}\lesssim R^{\frac{s}{2}-\alpha}.$$
Simple geometry shows that a given $T$ can only be in $\Tc(q_1)\cap \Tc(q_2)$ if $\dist(q_1,q_2)\lesssim R^{\frac12+\frac{2\alpha}{s}}$. In other words, each $T\in\Tc$ may be in at most $R^{\frac{2\alpha}{s}}$ sets $\Tc(q)$.

With this in mind, another double counting proves the claim
$$|\Qc(T_1)|R^{\frac{s}{2}-\alpha}\lesssim \Ic(\cup_{q\in \Qc(T_1)}\Tc(q),\Qc(T_1))\lesssim |\Tc|R^{\frac{2\alpha}{s}}.$$

\end{proof}

To improve the easy estimate, we need to invoke the theory of Furstenberg sets.  Definition \ref{d1} admits the following abstract formulation.
\begin{de}
Let $(X,d)$ be a metric space, let $s,\delta>0$. A set $\Pc\subset X$ is called a $(\delta,s,C)$-set if for each $x\in X$ and $r\ge \delta$ we have
$$|\Pc\cap B(x,r)|\le C(\frac{r}{\delta})^s.$$
\end{de}
We allow $C$ to depend on $\delta$.

We define the distance between two lines in the plane as
$$d(l_1,l_2)=\|\pi_1-\pi_2\|+|a_1-a_2|,$$
where $\pi_i$ is the orthogonal projection onto the linear subspace $\Lc_i$ in the direction of $l_i$, and $a_i=l_i\cap \Lc_i^\perp$. The space $\Ac(2,1)$ of all lines becomes a metric space when equipped with this distance. It is easy to see that a collection of lines  $\Lc$ intersecting the unit square is a $(\delta,s,C)$-set
if each rectangle with dimensions $1$ and $r\in(\delta,1]$  intersects $\lesssim C(\frac{r}{\delta})^s$ lines in $\Lc$, at full length (by that we mean that the intersection is a line segment of length $\sim 1$).
\smallskip

Let $\Lc$ be the collection of central axes (i.e. the lines bisecting the tubes in the long direction) of the rescaled rectangles $R^{-1}T$ with  $T\in \Tc$.
\begin{lem}
\label{l4}	
	The collection $\Lc$ is an $(R^{-1/2},1,R^{4\alpha})$-set.
\end{lem}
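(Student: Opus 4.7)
The plan is to unpack the definition of $(R^{-1/2}, 1, R^{4\alpha})$-set, translate the required incidence bound back to the pre-rescaling scale, and then combine the packing estimate \eqref{e12} with the Frostman condition \eqref{e4} and the lower bounds \eqref{e25}, \eqref{e22} on $\lambda$ and $M$. Undoing the rescaling by $R$, the characterization recalled just before the lemma says that $\Lc$ is an $(R^{-1/2}, 1, R^{4\alpha})$-set precisely when every rectangle $B$ of dimensions $R \times \Delta$, with $\Delta \in [R^{1/2}, R]$, contains at most $\lesssim R^{4\alpha} \Delta/\sqrt{R}$ tubes $T \in \Tc$ at full length. So I would fix such a $B$ and bound the number of $T$'s inside it.

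First I would restrict attention to the active $\theta$'s whose tubes can align with $B$. A tube of length $R$ fits inside $B$ at full length only if its direction deviates from that of $B$ by $\lesssim \Delta/R$. Since directions of $\theta$'s are determined by $\gamma'(\xi)$ for $\xi$ in some arc $I \subset [-1,1]$, the hypothesis $|\gamma''| \gtrsim 1$ forces $|I| \sim \Delta/R$. The Frostman bound \eqref{e4} then gives $\mu(I) \lesssim (\Delta/R)^s$, and since each active $\theta$ carries mass $\sim MR^{-s}$, the number of compatible active $\theta$'s is
$$D_B \;\lesssim\; \frac{(\Delta/R)^s}{MR^{-s}} \;=\; \frac{\Delta^s}{M}.$$

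Next I would apply \eqref{e12} (after enlarging $B$ by a constant factor so its orientation aligns with $\Tc_\theta$) to each such $\theta$, and sum:
$$\sum_{\theta\text{ compatible}} N_B(\theta) \;\lesssim\; \frac{\Delta^s}{M} \cdot \Bigl(\frac{\Delta}{\sqrt R}\Bigr)^{1-s}\frac{MR^{(s-5)/2}}{\lambda^2} \;=\; \frac{\Delta \, R^{s-3}}{\lambda^2}.$$
The powers of $M$ and $\Delta^s$ cancel cleanly. Then plugging in $\lambda^2 \ge M^2 R^{-2\alpha-5/2}$ from \eqref{e25} and $M^2 \ge R^{s-2\alpha}$ from \eqref{e22}, the remaining exponents collapse to $R^{4\alpha} \cdot \Delta/\sqrt{R}$, which is the target bound.

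The one delicate point is the compatibility count in step two: one must use the curvature hypothesis to convert a directional spread of size $\Delta/R$ into an arc-length of the same order on $\Gamma$, and then apply Frostman at the scale $\Delta/R \ge 1/R$. After that, everything reduces to careful bookkeeping of the exponents of $R$, $M$, $\lambda$, $\Delta$. I also need to observe that the bound $D_B \lesssim \Delta^s/M$ is only meaningful when $D_B \ge 1$, but this is automatic: if no compatible $\theta$ exists the claim is trivial.
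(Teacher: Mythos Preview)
Your proposal is correct and follows essentially the same approach as the paper: bound the number of contributing directions via the Frostman condition \eqref{e4} (using curvature to convert angular spread to arc-length), bound the tubes per direction via \eqref{e12}, and multiply. The only cosmetic differences are that the paper works at the rescaled (unit) scale with $r=\Delta/R$ and simplifies each of the two factors separately (obtaining $R^{3\alpha}(r\sqrt R)^{1-s}$ and $R^{\alpha}(r\sqrt R)^{s}$) before multiplying, whereas you undo the rescaling first and collapse the product to $\Delta R^{s-3}/\lambda^2$ before inserting \eqref{e25} and \eqref{e22}; the arithmetic is the same.
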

\begin{proof}
Fix a rectangle $B$ with dimensions $1$ and $r\ge R^{-1/2}$. The  statement follows from the following observations. For each direction, that is for each active $\theta$, the number of lines in $\Lc$ in this direction that intersect $B$ at full length is at most
$$(r\sqrt{R})^{1-s}\frac{MR^{\frac{s-5}{2}}}{\lambda^2}.$$
This is just a reformulation of \eqref{e12}. Using \eqref{e25} and \eqref{e22}, the expression above  is at most $R^{3\alpha}(r\sqrt{R})^{1-s}$.

The directions of the lines are $R^{-1/2}$- separated, as they correspond to distinct $\theta$'s.  The directions of the lines intersecting $B$ at full length have to lie inside an arc $I$ on the circle with measure $\sim r$. Thus, due to the curvature of $\Gamma$, the corresponding $\theta$'s lie inside a ball $B(x,10r)$. Since for each active $\theta$ we have $\mu(\theta)\sim MR^{-s}$
 and since $\mu(B(x,10r))\lesssim r^s$, we conclude that there can only be $\lesssim \frac{(Rr)^s}{M}$ contributing $\theta$'s. Using again \eqref{e22}, this is at most $R^\alpha (r\sqrt{R})^s$. We conclude that there are at most $R^{4\alpha}r\sqrt{R}$ lines intersecting $B$ at full length.

\end{proof}

Let us write $l(\delta)$ for the $\delta$-neighborhood of the line $l$.

\begin{de}
\label{d17}	
Let $0<s\le 1$, $0<t\le 2$, $\delta\in (0,1)$. A $\delta$-separated set $F\subset \R^2$ is called a discretized  $(\delta,s,t,C)$-Furstenberg set if there is a $(\delta,t,C)$-set of lines $\Lc$ with $|\Lc|\ge C^{-1}\delta^{-t}$ such that $F\cap l(\delta)$ contains a $(\delta,s,C)$-set with cardinality $\ge C^{-1}\delta^{-s}$, for each $l\in\Lc$ .
\end{de}

Each $(\delta,s,t,C)$-Furstenberg set is also a $(\delta,s,t,C')$-Furstenberg set if $C'\ge C$.

We will need the case $t=1$ of the following result, see \cite{OS}.

\begin{te}
If $0<s<1$, $s<t\le 2$, and $\epsilon<\epsilon(s,t)$ then every discretized  $(\delta,s,t,\delta^{-\epsilon})$-Furstenberg set has cardinality $\gtrsim \delta^{-2s-\epsilon}$.
\end{te}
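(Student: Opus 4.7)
The plan is to derive the cardinality bound by running a multi-scale induction on a logarithmic scale ladder, with the single-scale improvement coming from a discretized projection / sum-product theorem. First, after dyadic pigeonholing I would normalize to the situation $|F|=\delta^{-u}$ for some $u\in[s,2s]$, $|\Lc|\approx\delta^{-t}$, and each $l\in\Lc$ carrying a $(\delta,s,\delta^{-\epsilon})$-set of $\approx\delta^{-s}$ points of $F\cap l(\delta)$; the objective is to force $u\ge 2s-O(\epsilon)$.

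Second, I would set up the basic incidence bookkeeping
$$\Ic(F,\Lc):=\sum_{l\in\Lc}|F\cap l(\delta)|\approx\delta^{-s-t},$$
and then, for every intermediate scale $\delta\le\Delta\le 1$, pass to the $\Delta$-coarsening of both $F$ and $\Lc$. The key input is a quantitative radial-projection estimate in the Kaufman / Orponen--Shmerkin spirit: for a $(\Delta,t,\Delta^{-\epsilon})$-set of lines and a candidate $F$ that is too small at scale $\Delta$, the generic radial projection from a line $l\in\Lc$ cannot carry a full $(\Delta,s)$-set on each line without exhibiting a pronounced additive/multiplicative branching structure.

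Third, to harvest the exponent $2s$ one picks a geometric ladder $\delta=\Delta_0<\Delta_1<\ldots<\Delta_N=1$ with bounded ratio and applies, at each $\Delta_k$, an inverse theorem of Shmerkin type (equivalently, the nonlinear ABC sum-product theorem). The dichotomy is: either (i) the single-scale projection estimate upgrades the incidence count at $\Delta_k$, delivering a multiplicative gain $\Delta_k^{-c\epsilon}$ in $|F|$, or (ii) the inverse theorem forces approximate-subgroup structure on the $\Delta_k$-coarsened $F$, which is incompatible with the $(\Delta_k,t,\Delta_k^{-\epsilon})$-set hypothesis on $\Lc$ (since a genuine approximate subring would trivialize the incidence geometry). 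Summing the $\log$-gains across the $N\approx\log\delta^{-1}/\log\Delta_1^{-1}$ scales in case (i), and ruling out case (ii) by the hypothesis $t>s$, yields $\log|F|\ge(2s-O(\epsilon))\log\delta^{-1}$.

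The main obstacle, as I see it, is the quantitative projection step that drives each rung of the ladder: showing that when $|F|<\delta^{-2s+\eta}$, a generic $l\in\Lc$ cannot see a full $(\delta,s)$-set of cardinality $\delta^{-s}$ in $F\cap l(\delta)$. At $t=1$ this is essentially the discretized sum-product phenomenon in the radial-projection formulation, and its proof requires the Shmerkin/Bourgain nonlinear machinery, together with a careful bookkeeping of uniform constants to ensure $\epsilon$-losses do not accumulate across scales. It is precisely this uniformity that forces the threshold $\epsilon(s,t)$ to be implicit rather than explicit, and it is what separates the result from more elementary Kakeya-type incidence bounds such as the easy estimate \eqref{e30} used earlier in the paper.
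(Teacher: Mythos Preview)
The paper does not prove this theorem at all: it is quoted directly from \cite{OS} (Orponen--Shmerkin) and used as a black box. So there is no ``paper's own proof'' to compare against; your proposal is an attempt to reconstruct the argument behind the cited result.

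At the level of strategy your sketch is pointed in the right direction---the Orponen--Shmerkin argument does run through a discretized projection theorem with sum-product input, and the hypothesis $t>s$ is indeed what rules out the degenerate (structured) alternative. But the mechanism you describe for extracting the exponent $2s$ is not quite right. You propose to gain a fixed factor $\Delta_k^{-c\epsilon}$ at each rung of a ladder of $N\approx\log\delta^{-1}$ scales and then sum the logarithmic gains. That scheme produces at best a $\delta^{-cN\epsilon}$ improvement over the trivial bound $\delta^{-s}$, and there is no honest way to make $cN\epsilon$ equal to $s$ while keeping $\epsilon$ arbitrarily small and $N$ bounded in terms of the ladder ratio. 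The actual argument is a bootstrap rather than an accumulation of infinitesimal gains: one proves that if every $(\delta,s,t,\delta^{-\epsilon})$-Furstenberg set has size $\gtrsim\delta^{-u}$, then in fact every such set has size $\gtrsim\delta^{-u'}$ for some explicit $u'>u$ depending on $u,s,t$, and the fixed point of this iteration is $u=2s$. The single-step improvement comes from passing to an intermediate scale $\Delta=\delta^{\sigma}$, applying the inductive hypothesis to the $\Delta$-coarsened configuration, and combining it with the improved projection estimate inside each $\Delta$-cell; the two contributions multiply rather than add.

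Your final paragraph correctly locates the hard analytic input (the quantitative projection/radial projection estimate), but as written the proposal is a plan rather than a proof, and the ladder-summation heuristic would need to be replaced by the bootstrap just described before it could be made to close.
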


Using this result we can now improve on the cardinality of $\Qc_{heavy}$.

\begin{te}
	\label{t5}
Assume $\alpha<\alpha(s)$ is small enough. Then
\begin{equation}
\label{e44}
|\Qc_{heavy}|\lesssim R^{1-s-2\alpha}.
\end{equation}
\end{te}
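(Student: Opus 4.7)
The plan is to argue by contradiction: assume $|\Qc_{heavy}| > R^{1-s-2\alpha}$ and construct a discretized $(\delta,1-s,1,R^{O(\alpha)})$-Furstenberg set $F_0 \subseteq \Qc_{heavy}$ with $\delta = R^{-1/2}$. The Orponen--Shmerkin cardinality lower bound will then force $|\Qc_{heavy}|$ above the easy estimate \eqref{e30}, giving a contradiction once $\alpha<\alpha(s)$ is chosen small compared to $\epsilon(s,1)$.

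After rescaling $\R^2$ by $R^{-1}$, the tubes in $\Tc$ become $\delta$-neighborhoods of a family of lines $\Lc$ which, by Lemma \ref{l4}, is a $(\delta,1,R^{4\alpha})$-set with $|\Lc| \leq R^{1/2+4\alpha}$, and the heavy squares become $\delta$-cells each incident to at least $R^{s/2-\alpha}$ lines. Setting $\Qc(l) := \{q \in \Qc_{heavy} : q \in l(\delta)\}$, the heavy condition gives $\sum_l |\Qc(l)| \geq R^{s/2-\alpha}|\Qc_{heavy}|$. I would use dyadic pigeonholing to produce a density $\mu$ and a subfamily $\Lc_\mu \subseteq \Lc$ with $|\Qc(l)| \sim \mu$ on $\Lc_\mu$ and $|\Lc_\mu|\mu \gtrsim R^{s/2-\alpha}|\Qc_{heavy}|/\log R$. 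Comparing with $|\Lc|\mu \leq R^{1/2+4\alpha}\mu$ forces $\mu \gtrsim R^{(1-s)/2 - 7\alpha}$; by separately handling the regime where the dominant $\mu$ is unusually large (say $\mu \geq R^{(1-s)/2 + K\alpha}$ for an appropriate $K$), I may assume $\mu \in [R^{(1-s)/2 - O(\alpha)}, R^{(1-s)/2 + O(\alpha)}]$, so that $|\Lc_\mu| \gtrsim R^{1/2 - O(\alpha)}$.

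For each $l \in \Lc_\mu$ I will then extract, via a standard one-dimensional dyadic pigeonholing along the segment $l$, a subset $\tilde\Qc(l) \subseteq \Qc(l)$ of cardinality $\sim \mu/\log R$ that is a $(\delta, s'_l, O(\log R))$-set with Frostman exponent $s'_l := \log|\tilde\Qc(l)|/\log\delta^{-1} \leq (1-s) + O(\alpha)$. Since $(r/\delta)^{(1-s)+O(\alpha)} \leq R^{O(\alpha)}(r/\delta)^{1-s}$ for $r \in [\delta,1]$, $\tilde\Qc(l)$ is in fact a $(\delta,1-s,R^{O(\alpha)})$-set whose cardinality meets the Furstenberg threshold $\delta^{-(1-s)+O(\alpha)}$. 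Setting $F_0 := \bigcup_{l \in \Lc_\mu} \tilde\Qc(l) \subseteq \Qc_{heavy}$ and using $\Lc_\mu$ as the line family, $(F_0,\Lc_\mu)$ will realize a discretized $(\delta,1-s,1,R^{C\alpha})$-Furstenberg set for a universal constant $C$.

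Finally, I would choose $\alpha<\alpha(s)$ small enough that both $C\alpha < \epsilon(s,1)/2$ and $(10+2/s)\alpha < \epsilon(s,1)/4$. Taking $\epsilon = \epsilon(s,1)/2$, the Orponen--Shmerkin theorem yields $|F_0| \gtrsim \delta^{-2(1-s)-\epsilon} = R^{(1-s) + \epsilon(s,1)/4}$, hence $|\Qc_{heavy}| \geq |F_0| \gtrsim R^{(1-s) + \epsilon(s,1)/4}$. But \eqref{e30} provides $|\Qc_{heavy}| \lesssim R^{(1-s) + (10+2/s)\alpha}$, contradicting the lower bound for the chosen $\alpha$. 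The hardest part will be performing the per-tube Frostman refinement while keeping the regularity constant within $R^{O(\alpha)}$; a secondary technical point is handling gracefully the ``saturated'' regime $\mu \sim R^{1/2}$, which requires either the Kakeya endpoint of the Furstenberg theorem or a separate direct incidence argument.
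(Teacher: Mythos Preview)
Your overall strategy matches the paper's: argue by contradiction, build a discretized $(\delta,1-s,1,R^{O(\alpha)})$-Furstenberg set out of $\Qc_{heavy}$ and a large subfamily of $\Lc$, and invoke Orponen--Shmerkin to exceed the easy bound \eqref{e30}. However, the per-tube Frostman refinement, which you flag as the hardest step, does not work as a black box.

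The claim that a ``standard one-dimensional dyadic pigeonholing'' extracts from any $\delta$-separated set $\Qc(l)$ of size $\mu$ a subset of size $\sim \mu/\log R$ that is a $(\delta,s'_l,O(\log R))$-set with $s'_l=\log|\tilde\Qc(l)|/\log\delta^{-1}$ is false in general. Take $\Qc(l)$ to be $\mu$ consecutive $\delta$-separated points inside an interval $I$ of length $\mu\delta$. Any subset $P'$ of size $\sim\mu$ is still contained in $I$, and the Frostman condition at scale $|I|$ reads $|P'|\le C\mu^{s'_l}$, forcing $C\ge \mu^{1-s'_l}$, a genuine power of $R$ whenever $s'_l<1$. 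Uniformization over dyadic scales produces constant branching numbers, not a $(\delta,s,O(\log R))$-set with $s$ matching the cardinality. So the $(1-s)$-regularity of the heavy squares along a tube cannot be manufactured abstractly; it must come from the incidence structure of the problem.

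The paper supplies exactly this. Instead of refining, it shows directly that the full set $F\cap l(\delta)$ is already a $(\delta,1-s,R^{O(\alpha)})$-set, via a localized version of the double count behind \eqref{e31}. The input is not just that $\Lc$ is a $(\delta,1,R^{O(\alpha)})$-set (Lemma~\ref{l4}) but the finer two-parameter structure visible in its proof: the active directions form a $(\delta,s,R^{O(\alpha)})$-set, while within each direction the tubes form a $(\delta,1-s,R^{O(\alpha)})$-set by \eqref{e12}. Counting incidences between heavy squares in a subsegment of length $r$ and transversal tubes, and using this bi-regularity to bound at each dyadic angle how many tubes meet the segment, gives $|\Qc(l)\cap I|\lesssim R^{O(\alpha)}(r/\delta)^{1-s}$.

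Two smaller remarks. Your pigeonholing over $\mu$ and the ``saturated regime $\mu\sim R^{1/2}$'' concern are unnecessary: \eqref{e31} already gives the uniform upper bound $|\Qc(l)|\lesssim R^{(1-s)/2+O(\alpha)}$, so the paper's single heavy/light tube threshold at $R^{(1-s)/2-10\alpha}$ replaces the dyadic layer cleanly and rules out saturation for $s>0$. And the Furstenberg parameter you need is $\epsilon(1-s,1)$, not $\epsilon(s,1)$.
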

\begin{proof}
Assume for contradiction that $|\Qc_{heavy}|\ge R^{1-s-2\alpha}$. We will show that the centers of the squares in $\Qc_{heavy}$ form  a (rescaled) Furstenberg set (with $\delta=R^{-1/2}$), whose cardinality will be forced to be large by the previous theorem. So large, that in fact it  will violate the upper bound \eqref{e30}.

First, let us observe that
$$\Ic(\Tc,\Qc_{heavy})\gtrsim |\Qc_{heavy}|R^{\frac{s}2-\alpha}\ge R^{1-\frac{s}{2}-3\alpha}.$$

Partition $\Tc=\Tc_{light}\cup\Tc_{heavy}$ where $T\in\Tc_{light}$ if $T$ intersects at most $R^{\frac{1-s}{2}-10\alpha}$ squares in $\Qc_{heavy}$.
The axes of the tubes in $\Tc_{heavy}$ will serve as the (rescaled) lines in the definition of the Furstenberg set. We need to prove that there are many such lines.

To achieve this, we first observe that
$$\Ic(\Tc_{light},\Qc_{heavy})\lesssim   |\Tc| R^{\frac{1-s}2-10\alpha}\lesssim R^{1-\frac{s}{2}-6\alpha}.$$
The two incidence estimates force
$\Ic(\Tc_{heavy},\Qc_{heavy})\gtrsim  R^{1-\frac{s}{2}-3\alpha}.$ We may also write, using \eqref{e31}
$$\Ic(\Tc_{heavy},\Qc_{heavy})\lesssim |\Tc_{heavy}|R^{\frac{1-s}{2}+5\alpha+\frac{2\alpha}{s}}.$$
Putting things together, we find
$$|\Tc_{heavy}|\gtrsim R^{\frac12-8\alpha-\frac{2\alpha}{s}}.$$
This lower bound together with Lemma \ref{l4} implies that the central axes of the rescaled tubes $R^{-1}T$, $T\in\Tc_{heavy}$, call them $\Lc_{heavy}$, form a $(R^{-1/2},1,R^{100\alpha+\frac{2\alpha}{s}})$-set.

Take $F$ to consist of the rescaled  centers $R^{-1}c(q)$ of the squares  $q\in \Qc_{heavy}$. It is not hard to see that $F$ is a discretized $(R^{-1/2},1-s, 1,R^{100\alpha+\frac{2\alpha}{s}})$ - Furstenberg set, relative to $\Lc_{heavy}$. We picked the large exponent $100\alpha+\frac{2\alpha}{s}$ for extra safety, and of course, anything larger would also work. The fact that $F\cap l(R^{-1/2})$ is a $(R^{-1/2},1-s,R^{100\alpha+\frac{2\alpha}{s}})$-set for each $l\in\Lc_{heavy}$ follows by an argument similar to the earlier ones, so we omit it. The needed lower bound for $|F\cap l(R^{-1/2})|$ follows from the definition of $\Tc_{heavy}$.
\smallskip

It follows from Theorem \ref{t5} that, if $\alpha$ is chosen small enough so that $100\alpha+\frac{2\alpha}{s}<\epsilon(1-s,1)$, then
$$|\Qc_{heavy}|=|F|\gtrsim R^{1-s}R^{100\alpha+\frac{2\alpha}{s}}.$$
This contradicts \eqref{e30}.

\end{proof}

\subsection{Improved estimates for the heavy squares}

We prove the remaining $L^6$ estimate restricted to the heavy squares, matching the one for the light squares.
\begin{te}
$$\|\sum_{\theta\text{ active}}\sum_{T\in\Tc_{\lambda,\theta}}\langle F_\theta,W_T\rangle W_T\|_{L^6(\cup_{q\in\Qc_{heavy}}q)}^6\lesssim_\epsilon R^{\epsilon-2\alpha}R^{4s-10}.$$
\end{te}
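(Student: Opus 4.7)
The plan is to estimate $\|G\|_{L^6(q)}^6$ on each heavy square $q$ via the bush estimate (Proposition \ref{p5}), and then sum over $q \in \Qc_{heavy}$ using the cardinality bound \eqref{e44} from Theorem \ref{t5}. Here $G$ denotes the summand in the statement. On a given $q$ the relevant wave packets are those associated with tubes $T \in \Tc = \bigcup_\theta \Tc_{\lambda,\theta}$ that meet $q$; the contribution of the remaining tubes to $\|G\|_{L^6(q)}$ is negligible due to the Schwartz-type concentration $|W_T| \lesssim R^{-3/4}\chi_T$.

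For each $q \in \Qc_{heavy}$ I form the bush $\Bc_q = \{W_T : T \in \Tc,\; T \cap q \neq \emptyset\}$. Since the tubes attached to a common $\theta$ are parallel, only $O(1)$ of them can cross $q$, so $\Bc_q$ is a legitimate bush centered at $q$. Taking $a_T = \langle F_{\theta(T)}, W_T\rangle$ we have $\|a_T\|_{l^\infty} \sim \lambda \lesssim \lambda_{max} \sim M R^{-5/4}$ by \eqref{e15}, so Proposition \ref{p5} yields
\begin{equation*}
\|G\|_{L^6(q)}^6 \lesssim_\epsilon R^{\epsilon-7/2} D^3 P^2 \lambda^6 \lesssim_\epsilon R^{\epsilon-11} D^3 P^2 M^6.
\end{equation*}
Since the squares are pairwise disjoint, summing over $q$ and using \eqref{e44} gives
\begin{equation*}
\|G\|_{L^6(\cup_{q\in\Qc_{heavy}} q)}^6 \lesssim_\epsilon R^{1-s-2\alpha} \cdot R^{\epsilon-11} D^3 P^2 M^6 = R^{\epsilon-10-s-2\alpha} D^3 P^2 M^6.
\end{equation*}

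To close the argument we feed in the Frostman constraints \eqref{e5}, \eqref{e6}, \eqref{e7}:
\begin{equation*}
D^3 P^2 M^6 = (DM)^3 (MP)^2 M \lesssim R^{3s} \cdot R^{3s/2} \cdot R^{s/2} = R^{5s}.
\end{equation*}
Substituting into the previous display produces the desired $R^{\epsilon - 2\alpha} R^{4s-10}$. Notice that unlike in the light-square estimate, the $R^{-2\alpha}$ savings now come entirely from Theorem \ref{t5} (that is, from the Furstenberg set bound), while the bush estimate only needs to be used with the crude bounds $\lambda \le \lambda_{max}$ and the trivial Frostman inequalities.

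The only real technical point is the reduction to the bush on each $q$: one must justify that wave packets whose tubes do not essentially meet $q$ contribute a negligible tail to $\|G\|_{L^6(q)}$. This is standard, handled by slightly enlarging $q$ (or equivalently passing to the weight $w_q$ appearing in Proposition \ref{p5}) and exploiting the rapid decay of $W_T$ away from $T$, at the cost of an $R^\epsilon$ factor absorbed into the stated losses.
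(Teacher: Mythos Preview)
Your proof is correct and follows essentially the same approach as the paper: localize to each heavy $q$ via the bush $\Bc(q)$ using rapid decay of $W_T$, apply Proposition~\ref{p5} with $\|a_T\|_{l^\infty}\lesssim\lambda_{max}\sim MR^{-5/4}$, sum over $q\in\Qc_{heavy}$ using \eqref{e44}, and close with the factorization $M^6D^3P^2=(MD)^3(MP)^2M\lesssim R^{5s}$ from \eqref{e5}--\eqref{e7}. The arithmetic and the handling of the tail contribution match the paper's argument exactly.
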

\begin{proof}
Each $q\in\Qc_{heavy}$ determines the bush $\Bc(q)$ consisting of all $T\in \Tc$ that intersect (a slight enlargement of) $q$. Due to rapid decay, we may write for each $x\in q$
$$\sum_{\theta\text{ active}}\sum_{T\in\Tc_{\lambda,\theta}}\langle F_\theta,W_T\rangle W_T(x)=\sum_{T\in\Bc(q)}\langle F,W_T\rangle W_T(x)+O(R^{-100}).$$
Thus, it suffices to estimate
$$\sum_{q\in\Qc_{heavy}}\|\sum_{T\in\Bc(q)}\langle F,W_T\rangle W_T\|_{L^6(q)}^6\;.$$
Using Proposition \ref{p5}, followed by \eqref{e15} and \eqref{e44}, this is dominated by
$$|\Qc_{heavy}|\lambda^6 R^{\epsilon-7/2}D^3P^2\lesssim  R^{\epsilon-2\alpha}R^{-10-s}M^6D^3P^2.$$
The theorem now follows once we notice that $M^6D^3P^2=(MD)^3(MP)^2M\lesssim R^{5s}$, by invoking \eqref{e5}, \eqref{e6} and \eqref{e7}.

\end{proof}
Combining this result with Remark \ref{r7} and  Theorem \ref{t9} delivers the proof of \eqref{e3}, and thus of Theorem \ref{t1}.

\end{document}